
\documentclass[twoside,sort&compress]{article}
\usepackage{hyperref}
\usepackage{fancyhdr}
\usepackage{amsmath}
\usepackage{amssymb}
\usepackage{geometry}
\usepackage{cite}
\usepackage{graphicx}

\setcounter{MaxMatrixCols}{10}

\newtheorem{theorem}{Theorem}[section]

\newtheorem{lemma}[theorem]{Lemma}

\newenvironment{proof}[1][Proof]{\noindent\textbf{#1.} }{\ \rule{0.5em}{0.5em}}
\numberwithin{equation}{section}
\geometry{left=3 cm,right=3 cm,top=2 cm,bottom=2 cm}

\pagestyle{fancy} \fancyhead[RO,LE]{\small} \fancyhead[LO]{\small
M. Idris, H. Gunawan, and Eridani \hrule }
\fancyhead[RE]{\small The Boundedness of Bessel-Riesz Operators
On Generalized Morrey Spaces\hrule}
\begin{document}

\setcounter{page}{1} \thispagestyle{plain}


\begin{center}
{\Large The Boundedness of Bessel-Riesz Operators\\
on Generalized Morrey Spaces}

\bigskip

{\large M. Idris$^{1}$, H. Gunawan$^{2}$, and Eridani$^3$ }

\bigskip

$^{1,2}$Department of Mathematics, Bandung Institute of Technology\\[0pt]
Bandung 40132, Indonesia

$^{3}$Department of Mathematics, Airlangga University\\[0pt]
Campus C, Mulyorejo, Surabaya 60115, Indonesia

E-mail: $^1$mochidris@students.itb.ac.id, $^2$hgunawan@math.itb.ac.id,
$^3$eri.campanato@gmail.com

\bigskip
\end{center}

\begin{quote}
\textbf{Abstract.} In this paper, we prove the boundedness of Bessel-Riesz
operators on generalized Morrey spaces. The proof uses the usual dyadic
decomposition, a Hedberg-type inequality for the operators, and the boundedness
of Hardy-Littlewood maximal operator. Our results reveal that the norm of
the operators is dominated by the norm of the kernels.

\textbf{Keywords.} \emph{Bessel-Riesz operators, Hardy-Littlewood maximal operator,
generalized Morrey spaces, boundedness, kernels.}

\textbf{MSC 2000} Primary 42B20; Secondary 26A33, 42B25, 26D10, 47G10
\end{quote}


\section{Introduction}

We begin with the definition of Bessel-Riesz operators. For $0<\gamma $ and
$0<\alpha <n$, we define
$$
I_{\alpha ,\gamma }f\left( x\right)
:=\int_{\mathbb{R}^{n}}K_{\alpha ,\gamma }\left( x-y\right)
f\left( y\right) dy,$$ for every $f\in L_{loc}^{p}\left(
\mathbb{R}^{n}\right) ,\ p\geq 1$,  where $\ K_{\alpha,
,\gamma }\left( x\right) :=\frac{\left\vert x\right\vert ^{\alpha -n}}{
\left( 1+\left\vert x\right\vert \right) ^{\gamma }}$. Here, $I_{\alpha,
\gamma}$ is called \emph{Bessel-Riesz operator} and $K_{\alpha, \gamma}$
is called \emph{Bessel-Riesz kernel}. The name of the kernel resembles
the product of Bessel kernel and Riesz kernel \cite{Stein}. While the Riesz
kernel captures the local behaviour, the Bessel kernel take cares the global
behaviour of the function. The Bessel-Riesz kernel is used in studying
the behaviour of the solution of a Schr\"odinger type equation \cite{Kurata}.

For $\gamma=0$, we have $I_{\alpha ,0}=I_{\alpha }$, known as \textit{%
fractional integral operators }or \textit{Riesz potentials
}\cite{Stein1}. Around 1930, Hardy and Littlewood \cite{Hardy1, Hardy2} and
Sobolev \cite{Sobolev} have proved the boundedness of $I_{\alpha }$ on
Lebesgue spaces via the inequality
$$
\left\Vert I_{\alpha }f\right\Vert _{L^{q}}\leq C\,\left\Vert
f\right\Vert _{L^{p}},
$$
for every $f \in L^{p}\left( \mathbb{R}^{n}\right) $, where $1<p<
\frac{\alpha }{n}$, and $\frac{1}{q}=\frac{1}{p}-\frac{\alpha
}{n}$. Here $C$ denotes a constant which may depend on $\alpha, p,q$, and
$n$, but not on $f$.

For $1\leq p\leq q$, the \textit{(classical) Morrey space} $L^{p,q}\left(
\mathbb{R}^{n}\right)$ is
defined by
$$
L^{p,q}\left( \mathbb{R}^{n}\right) :=\left\{ f \in
L_{loc}^{p}\left( \mathbb{R}^{n}\right)  :\left\Vert f\right\Vert
_{L^{p,q}}<\infty \right\},
$$
where $\| f\|_{L^{p,q}}:=\sup\limits_{r>0, a\in \mathbb{R}^{n}}r^{n\left(
1/q-1/p\right) }\left( \int_{|x-a| < r}|f(x)|^{p}dx\right) ^{1/p}$.
For these spaces, we have an inclusion property which is presented by the
following theorem.

\begin{theorem}
\label{T:Kb copy(a1)} For $1\leq p\leq q$, we have
$L^{q}\left( \mathbb{R}^{n}\right) =L^{q,q}\left(
\mathbb{R}^{n}\right) \subseteq L^{p,q}\left(
\mathbb{R}^{n}\right) \subseteq L^{1,q}\left(
\mathbb{R}^{n}\right)$.
\end{theorem}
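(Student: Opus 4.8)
The plan is to handle the equality $L^q=L^{q,q}$ and the two inclusions separately, with H\"older's inequality doing all the work for the inclusions. First I would verify $L^q(\mathbb{R}^n)=L^{q,q}(\mathbb{R}^n)$. When $p=q$ the scaling factor $r^{n(1/q-1/p)}$ equals $r^0=1$, so the Morrey norm collapses to $\sup_{r>0,\,a}\left(\int_{|x-a|<r}|f(x)|^q\,dx\right)^{1/q}$. Since these integrals increase to $\int_{\mathbb{R}^n}|f(x)|^q\,dx$ as $r\to\infty$, the supremum equals $\|f\|_{L^q}$, which gives the equality of both norms and spaces.

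The core of the argument is a single monotonicity estimate: for $1\le p_1\le p_2\le q$ I would show that $\|f\|_{L^{p_1,q}}\le C\,\|f\|_{L^{p_2,q}}$ by working on an arbitrary ball $B=B(a,r)$. Fixing such a ball and applying H\"older's inequality to $\int_B|f|^{p_1}\,dx$ with the exponents $p_2/p_1$ and its conjugate $p_2/(p_2-p_1)$ yields
$$
\left(\int_B|f(x)|^{p_1}\,dx\right)^{1/p_1}\le |B|^{\,1/p_1-1/p_2}\left(\int_B|f(x)|^{p_2}\,dx\right)^{1/p_2}.
$$
Writing $|B|=c_n r^n$, where $c_n$ is the volume of the unit ball in $\mathbb{R}^n$, the factor $|B|^{\,1/p_1-1/p_2}$ contributes exactly $r^{\,n(1/p_1-1/p_2)}$ together with a constant depending only on $n$, $p_1$, and $p_2$.

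Multiplying both sides by the normalizing factor $r^{n(1/q-1/p_1)}$, the powers of $r$ combine as $r^{n(1/q-1/p_1)}\,r^{n(1/p_1-1/p_2)}=r^{n(1/q-1/p_2)}$, so that
$$
r^{n(1/q-1/p_1)}\left(\int_B|f(x)|^{p_1}\,dx\right)^{1/p_1}\le c_n^{\,1/p_1-1/p_2}\,r^{n(1/q-1/p_2)}\left(\int_B|f(x)|^{p_2}\,dx\right)^{1/p_2}.
$$
Because the constant $c_n^{\,1/p_1-1/p_2}$ is independent of $r$ and $a$, taking the supremum over all $r>0$ and $a\in\mathbb{R}^n$ gives $\|f\|_{L^{p_1,q}}\le c_n^{\,1/p_1-1/p_2}\,\|f\|_{L^{p_2,q}}$, hence $L^{p_2,q}(\mathbb{R}^n)\subseteq L^{p_1,q}(\mathbb{R}^n)$. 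Applying this with $(p_1,p_2)=(p,q)$ yields $L^{q,q}\subseteq L^{p,q}$, and with $(p_1,p_2)=(1,p)$ yields $L^{p,q}\subseteq L^{1,q}$, which closes the chain.

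The argument is essentially routine; the one point to get right is the bookkeeping of the exponents, namely that the power of $r$ produced by $|B|^{1/p_1-1/p_2}$ exactly absorbs the mismatch between the normalizing factors $r^{n(1/q-1/p_1)}$ and $r^{n(1/q-1/p_2)}$. This cancellation is precisely what makes the Morrey scaling the natural one, and it is the reason the inclusion constants can be taken uniform in $r$ and $a$.
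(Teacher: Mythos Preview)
Your argument is correct and is precisely the standard H\"older-based proof of the Morrey inclusion; the bookkeeping of the $r$-powers is done correctly, and the equality $L^{q}=L^{q,q}$ via monotone convergence is fine. Note, however, that the paper does not actually supply a proof of this theorem: it is stated as a background fact in the Introduction and immediately followed by the discussion of Spanne's and Adams--Chiarenza--Frasca's results, so there is no ``paper's own proof'' to compare against. What you have written is exactly the argument one would expect for this standard result.
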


\noindent On Morrey spaces\textit{, }Spanne \cite{Peetre} has shown that
$I_{\alpha }$ is bounded form $L^{p_{1},q_{1}}\left(
\mathbb{R}^{n}\right) $ to $L^{p_{2},q_{2}}\left(\mathbb{R}^{n}\right)$
for $1<p_{1}<q_{1}<$ $\frac{n}{\alpha }$, $\frac{1}{p_{2}}=
\frac{1}{p_{1}}-\frac{\alpha }{n}$, and $\frac{1}{q_{2}}=\frac{1}{q_{1}}-
\frac{\alpha }{n}$. Furthermore, Adams \cite{Adams} and
Chiarenza and Frasca \cite{Chiarenza} reproved it and obtained a stronger
result which is presented by the following theorem.

\begin{theorem}
\label{T:Kb copy(1)} [Adams, Chiarenza-Frasca] If $0<\alpha <n$, then we
have
$$
\left\Vert I_{\alpha }f\right\Vert _{L^{p_{2},q_{2}}}\leq
C\,\left\Vert f\right\Vert _{L^{p_{1},q_{1}}},
$$
for every $f\in L^{p_{1},q_{1}}\left(\mathbb{R}^{n}\right)$ where
$1<p_{1}<q_{1}<$ $\frac{n}{\alpha }$,$\frac{1}{p_{2}}=
\frac{1}{p_{1}}\left( 1-\frac{\alpha q_{1}}{n}\right)$, and
$\frac{1}{q_{2}}=\frac{1}{q_{1}}-\frac{\alpha }{n}$.
\end{theorem}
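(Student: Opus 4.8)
The plan is to reduce the operator bound to a pointwise \emph{Hedberg-type inequality} and then to invoke the boundedness of the Hardy--Littlewood maximal operator $M$ on generalized Morrey spaces. Fixing $x\in\mathbb{R}^n$ and a free parameter $R>0$, I would split the kernel's support into a near and a far region,
$$
I_\alpha f(x)=\int_{|x-y|<R}|x-y|^{\alpha-n}f(y)\,dy+\int_{|x-y|\ge R}|x-y|^{\alpha-n}f(y)\,dy=:I_1+I_2,
$$
and estimate each piece by decomposing the corresponding region into dyadic annuli.

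For the near part $I_1$, I would sum over the annuli $2^{-k-1}R\le|x-y|<2^{-k}R$, $k\ge0$. On each annulus $|x-y|^{\alpha-n}\le(2^{-k-1}R)^{\alpha-n}$ since $\alpha-n<0$, and $\int_{|x-y|<2^{-k}R}|f|\le C(2^{-k}R)^n\,Mf(x)$ by the definition of the maximal function; the resulting geometric series has ratio $2^{-\alpha}<1$, so $|I_1|\le CR^\alpha Mf(x)$. For the far part $I_2$, I would sum over $2^kR\le|x-y|<2^{k+1}R$, $k\ge0$, bound $|x-y|^{\alpha-n}\le(2^kR)^{\alpha-n}$, and apply H\"older's inequality with exponent $p_1$ together with the Morrey-norm estimate and the volume factor $|B(x,2^{k+1}R)|^{1/p_1'}$ to get $\int_{|x-y|<2^{k+1}R}|f|\le C(2^{k+1}R)^{n(1-1/q_1)}\|f\|_{L^{p_1,q_1}}$. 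Since $(\alpha-n)+n(1-1/q_1)=\alpha-n/q_1$, the resulting series is geometric with ratio $2^{\alpha-n/q_1}$, which converges precisely because $q_1<n/\alpha$, giving $|I_2|\le CR^{\alpha-n/q_1}\|f\|_{L^{p_1,q_1}}$.

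Adding the two bounds and optimizing over $R$ — balancing the terms via $R^{n/q_1}\sim\|f\|_{L^{p_1,q_1}}/Mf(x)$ — yields the Hedberg-type inequality
$$
|I_\alpha f(x)|\le C\,(Mf(x))^{1-\alpha q_1/n}\,\|f\|_{L^{p_1,q_1}}^{\alpha q_1/n}.
$$
To finish, I would raise this to the power $p_2$, integrate over an arbitrary ball $|x-a|<r$, and use the algebraic identity $p_2(1-\alpha q_1/n)=p_1$ forced by the definition of $p_2$, so that $(Mf)^{p_2(1-\alpha q_1/n)}=(Mf)^{p_1}$. The boundedness of $M$ on $L^{p_1,q_1}$ (valid since $p_1>1$) then controls $\|Mf\|_{L^{p_1,q_1}}$ by $\|f\|_{L^{p_1,q_1}}$. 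Writing $\theta:=1-\alpha q_1/n=p_1/p_2$, a direct check shows the exponent of $r$ collapses to $\frac{1}{q_1}(1-\theta)-\frac{\alpha}{n}=\frac{\alpha}{n}-\frac{\alpha}{n}=0$, while the exponents of $\|f\|_{L^{p_1,q_1}}$ add up to $\frac{\alpha q_1}{n}+\frac{p_1}{p_2}=1$. Taking the supremum over $r$ and $a$ gives $\|I_\alpha f\|_{L^{p_2,q_2}}\le C\|f\|_{L^{p_1,q_1}}$.

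The main obstacle is the far-part estimate $I_2$: one must choose the H\"older exponent and read off the Morrey bound so that the accumulated power of $2^k$ is strictly negative, and this is exactly where the hypothesis $q_1<n/\alpha$ is consumed. The remaining external input is the boundedness of the maximal operator on generalized Morrey spaces, which converts the pointwise Hedberg inequality into the norm bound; I would either cite this or establish it separately, as it is indispensable for the final step.
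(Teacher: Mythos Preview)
Your argument is correct and is precisely the Adams--Chiarenza--Frasca strategy: split at radius $R$, control the near part by $R^\alpha Mf(x)$ via dyadic annuli, control the far part by $R^{\alpha-n/q_1}\|f\|_{L^{p_1,q_1}}$ via H\"older plus the Morrey bound on annuli, optimize in $R$ to obtain the Hedberg-type pointwise inequality, and conclude by the $L^{p_1,q_1}$-boundedness of $M$ together with the exponent identities $p_2(1-\alpha q_1/n)=p_1$ and $1/q_2=(p_1/p_2)/q_1$.

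Note, however, that the paper does not actually supply a proof of this theorem: it is quoted in the introduction as a known result of Adams and Chiarenza--Frasca and is used only as background. What the paper \emph{does} prove are Theorems~2.1--2.3 for the Bessel--Riesz operator $I_{\alpha,\gamma}$, and those proofs follow exactly the template you wrote down (dyadic split, maximal-function bound on $I_1$, H\"older plus Morrey on $I_2$, optimize $R$, invoke Nakai's maximal theorem). In that sense your proposal is the $\gamma=0$, $\phi(r)=r^{-n/q_1}$ specialization of the paper's own arguments, so the approaches coincide.
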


For $\phi :\mathbb{R}^{+}\rightarrow \mathbb{R}^{+}$ and $1\leq p<\infty$,
we define the {\it generalized Morrey space}
$$
L^{p,\phi }\left( \mathbb{R}^{n}\right) :=\left\{ f \in
L_{loc}^{p}\left( \mathbb{R}^{n}\right)  :\left\Vert f\right\Vert
_{L^{p,\phi }}<\infty \right\}, $$
where $\| f\|_{L^{p,\phi }}:=\sup\limits_{r>0,a \in \mathbb{R}^{n}}\frac{1}{\phi(r)
}\left(\frac{1}{r^{n}}\int_{\left\vert x-a\right\vert <r}\left\vert f(x)
\right\vert ^{p}dx\right) ^{1/p}$. Here we assume that $\phi$ is almost
decreasing and $t^{n/p}\phi(t)$ is almost decreasing, so that $\phi$ satisfies
the {\it doubling condition}, that is, there exists a constant $C$ such that
$\frac{1}{C} \leq \frac{\phi(r)}{\phi(v)}\leq C$ whenever
$\frac{1}{2} \leq \frac{r}{v}\leq 2$.

In 1994, Nakai \cite{Nakai94} obtained the boundedness of $I_{\alpha }$ from
$L^{p_{1},\phi }\left(\mathbb{R}^{n}\right)$ to $L^{p_{2},\psi }\left(
\mathbb{R}^{n}\right)$ where  $1<p_{1}<$ $\frac{n}{\alpha }$, $\frac{1}{p_{2}}=
\frac{1}{p_{1}}-\frac{\alpha }{n}$ and $\int_{r}^{\infty
}v^{\alpha -1}\phi(v) dv\leq Cr^{\alpha }\phi(r) \leq C\psi(r)$ for every $r>0$.
Nakai's result may be viewed as an extension of Spanne's.
Later on, in 2009, Gunawan and Eridani \cite{Gunawan} extended Adams-Chiarenza-Frasca's
result.

\begin{theorem}
\label{T:Kbge)} [Gunawan-Eridani] If $\int_{r}^{\infty }\frac{\phi \left( v\right)
}{v}dv \leq C\phi \left( r\right) $, and $\phi(r) \leq Cr^{\beta }$
for every $r>0$, $-\frac{n}{p_{1} }\leq \beta <-\alpha $,
$1<p_{1}<\frac{n}{\alpha}$,  $0<\alpha
<n$, then we have
$$
\| I_{\alpha }f\|_{L^{p_{2},\psi }}\leq C\,\| f\|_{L^{p_{1},\phi }}
$$
for every $f\in L^{p_{1},\phi }\left(\mathbb{R}^{n}\right) $ where
$p_{2}=\frac{\beta p_{1}}{\alpha+\beta }$ and $\psi(r)
=\phi(r)^{p_{1}/p_{2}}$, $r>0$.
\end{theorem}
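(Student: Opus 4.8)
The plan is to follow the Adams--Hedberg scheme announced in the abstract: prove a pointwise Hedberg-type inequality for $I_{\alpha}$ and then feed it into the boundedness of the Hardy--Littlewood maximal operator $Mf(x):=\sup_{r>0}\frac{1}{r^{n}}\int_{|x-y|<r}|f(y)|\,dy$ on $L^{p_{1},\phi}$. First I would fix $x\in\mathbb{R}^{n}$ and a free radius $R>0$ and split
$$|I_{\alpha}f(x)|\le\int_{|x-y|<R}|x-y|^{\alpha-n}|f(y)|\,dy+\int_{|x-y|\ge R}|x-y|^{\alpha-n}|f(y)|\,dy=:I_{1}+I_{2}.$$

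For the near part $I_{1}$ I would decompose the ball into the dyadic annuli $2^{-k-1}R\le|x-y|<2^{-k}R$ ($k\ge0$); on each one $|x-y|^{\alpha-n}\approx(2^{-k}R)^{\alpha-n}$, so the corresponding piece is $\lesssim(2^{-k}R)^{\alpha}Mf(x)$, and summing the geometric series (convergent since $\alpha>0$) gives $I_{1}\lesssim R^{\alpha}Mf(x)$. For the far part $I_{2}$ I would decompose into $2^{k}R\le|x-y|<2^{k+1}R$ ($k\ge0$), replace $|x-y|^{\alpha-n}$ by $(2^{k}R)^{\alpha-n}$, and apply H\"older with exponents $p_{1},p_{1}'$ together with the definition of $\|\cdot\|_{L^{p_{1},\phi}}$ to obtain $\int_{|x-y|<2^{k+1}R}|f|\lesssim\|f\|_{L^{p_{1},\phi}}\,\phi(2^{k+1}R)(2^{k+1}R)^{n}$. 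The doubling of $\phi$ and then the growth bound $\phi(r)\le Cr^{\beta}$ turn each piece into $(2^{k}R)^{\alpha+\beta}\|f\|_{L^{p_{1},\phi}}$, and since $\alpha+\beta<0$ the series converges to $I_{2}\lesssim R^{\alpha+\beta}\|f\|_{L^{p_{1},\phi}}$.

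Combining the two bounds gives $|I_{\alpha}f(x)|\lesssim R^{\alpha}Mf(x)+R^{\alpha+\beta}\|f\|_{L^{p_{1},\phi}}$ for every $R>0$. I would then optimize by balancing the two terms, i.e.\ taking $R^{-\beta}=\|f\|_{L^{p_{1},\phi}}/Mf(x)$, and use the identity $p_{1}/p_{2}=(\alpha+\beta)/\beta$ to arrive at the Hedberg-type inequality $|I_{\alpha}f(x)|\lesssim(Mf(x))^{p_{1}/p_{2}}\|f\|_{L^{p_{1},\phi}}^{1-p_{1}/p_{2}}$. Raising this to the power $p_{2}$, averaging over a ball $|x-a|<r$, and invoking the boundedness of $M$ on $L^{p_{1},\phi}$ --- which is where the hypothesis $\int_{r}^{\infty}\phi(v)/v\,dv\le C\phi(r)$ enters --- yields $\left(\frac{1}{r^{n}}\int_{|x-a|<r}|I_{\alpha}f|^{p_{2}}\right)^{1/p_{2}}\lesssim\phi(r)^{p_{1}/p_{2}}\|f\|_{L^{p_{1},\phi}}=\psi(r)\|f\|_{L^{p_{1},\phi}}$; dividing by $\psi(r)$ and taking the supremum over $r>0$ and $a\in\mathbb{R}^{n}$ finishes the estimate.

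I expect the far part $I_{2}$ to be the main obstacle: one has to juggle the doubling constants of $\phi$, the H\"older exponents, and the power bound $\phi(r)\le Cr^{\beta}$ so that the dyadic sum both converges (this is exactly what forces $\alpha+\beta<0$, hence $\beta<-\alpha$) and produces precisely the factor $R^{\alpha+\beta}$ that makes the optimization land on the stated $p_{2}=\beta p_{1}/(\alpha+\beta)$ and $\psi=\phi^{p_{1}/p_{2}}$. The remaining care is to have the maximal-operator bound available on $L^{p_{1},\phi}$ and to use $-n/p_{1}\le\beta$, through H\"older and the admissibility of $\phi$, so as to keep $p_{2}$ in the correct range.
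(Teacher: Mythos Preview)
The paper does not give its own proof of this theorem: it is stated in the introduction as a known result of Gunawan--Eridani and cited from \cite{Gunawan}, so there is no in-paper proof to compare against directly. That said, your argument is exactly the Adams--Hedberg scheme that the paper itself adapts in proving Theorems~\ref{p:a01}--\ref{p:a03} for $I_{\alpha,\gamma}$: split at radius $R$, dyadic annuli, $I_{1}\lesssim R^{\alpha}Mf(x)$, $I_{2}\lesssim R^{\alpha+\beta}\|f\|_{L^{p_{1},\phi}}$ via H\"older and $\phi(r)\le Cr^{\beta}$, optimize in $R$, then invoke the boundedness of $M$ on $L^{p_{1},\phi}$. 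So your route matches both the cited source's method and the paper's template.

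One small point: you attribute the hypothesis $\int_{r}^{\infty}\phi(v)v^{-1}\,dv\le C\phi(r)$ to the maximal-operator step, but in the paper's framework Nakai's theorem (Theorem~\ref{nak:a01}) for $M$ on $L^{p,\phi}$ rests on the standing doubling-type assumptions on $\phi$, not on this integral condition. In the original Gunawan--Eridani argument that condition is used in the far-part estimate (it lets one control $\int_{R}^{\infty}v^{\alpha-1}\phi(v)\,dv$ directly, without passing through the power bound). Your version bypasses it by using $\phi(r)\le Cr^{\beta}$ with $\alpha+\beta<0$ to sum the dyadic series for $I_{2}$, which is perfectly valid; just be aware that the integral hypothesis is then redundant in your line of proof rather than feeding the maximal bound.
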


The proof of the boundedness of $I_{\alpha}$ on Lebesgue spaces, Morrey spaces,
or generalized Morrey spaces, usually involves \emph{Hardy-Littlewood maximal
operator}, which is defined by
$$
Mf\left( x\right) :=\sup_{B \ni x}\frac{1}{\left\vert B\right\vert }%
\int_{B}\left\vert f\left( y\right) \right\vert dy,
$$
for every $f\in L_{loc}^{p}\left(\mathbb{R}^{n}\right) $
where $| B |$ denotes the Lebesgue measure of the ball $B=B(a,r)$
(centered at $a\in \mathbb{R}^{n}$ with radius $r>0$). It is well known
that the operator $M$ is bounded on $L^{p}\left(\mathbb{R}^{n}\right)$
for $1<p\le\infty$ \cite{Stein1, Stein}
and also on Morrey spaces $L^{p,q}$ for $1<p\le q\le\infty$ \cite{Chiarenza}.

Next, we know that $I_{\alpha ,\gamma}$ is guaranteed to be bounded on generalized
Morrey spaces because $K_{\alpha ,\gamma}(x)  \leq K_{\alpha }(x)$ for every
$x \in \mathbb{R}^{n}$. The boundedness of $I_{\alpha,\gamma}$ on Lebesgue
spaces can also be proved by using Young's inequality, as shown in \cite{Idris}.

\begin{theorem}\cite{Idris} \label{cc:01}
For $0<\gamma $ and $0<\alpha <n$, we have $K_{\alpha ,\gamma } \in  L^{t }\left(
\mathbb{R}^{n}\right)$ whenever $ \frac{n}{n+\gamma -\alpha } < t <\frac{n}{n-\alpha }$.
Accordingly, we have
$$
\| I_{\alpha,\gamma }f\|_{L^{q}}\leq \| K_{\alpha ,\gamma}\|_{L^{t}}\| f\|_{L^{p}}
$$
for every $f\in L^{p}\left( \mathbb{R}^{n}\right) $ where $1\leq p<t^{\prime }$,
$\frac{n}{n+\gamma -\alpha }<t<\frac{n}{n-\alpha }$,
$\frac{1}{q}+1=\frac{1}{p}+\frac{1}{t}$.
\end{theorem}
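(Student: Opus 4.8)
The plan is to prove this in two parts, matching the two assertions in the statement. The first part establishes that the kernel $K_{\alpha,\gamma}$ lies in $L^t(\mathbb{R}^n)$ for the stated range of $t$; the second part then invokes Young's convolution inequality to obtain the operator bound.

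For the membership $K_{\alpha,\gamma}\in L^t$, I would split the integral $\int_{\mathbb{R}^n}|K_{\alpha,\gamma}(x)|^t\,dx$ into the region near the origin and the region far from it, say $|x|<1$ and $|x|\geq 1$. On $|x|<1$ we have $(1+|x|)^{\gamma}\approx 1$, so the integrand behaves like $|x|^{(\alpha-n)t}$; passing to polar coordinates this gives a radial integral $\int_0^1 s^{(\alpha-n)t}s^{n-1}\,ds$, which converges precisely when $(\alpha-n)t+n-1>-1$, i.e. when $t<\frac{n}{n-\alpha}$. On $|x|\geq 1$ we have $(1+|x|)^{\gamma}\approx |x|^{\gamma}$, so the integrand behaves like $|x|^{(\alpha-n-\gamma)t}$, and the radial integral $\int_1^{\infty}s^{(\alpha-n-\gamma)t}s^{n-1}\,ds$ converges precisely when $(\alpha-n-\gamma)t+n-1<-1$, i.e. when $t>\frac{n}{n+\gamma-\alpha}$. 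Intersecting the two conditions yields exactly the stated range $\frac{n}{n+\gamma-\alpha}<t<\frac{n}{n-\alpha}$, and since $\gamma>0$ this interval is nonempty.

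For the operator bound, I would write $I_{\alpha,\gamma}f=K_{\alpha,\gamma}*f$ and apply Young's inequality for convolutions: if $\frac{1}{q}+1=\frac{1}{p}+\frac{1}{t}$ with all exponents at least $1$, then $\|K_{\alpha,\gamma}*f\|_{L^q}\leq \|K_{\alpha,\gamma}\|_{L^t}\|f\|_{L^p}$. The condition $1\leq p<t'$ (where $t'$ is the conjugate exponent of $t$) guarantees that $q\geq 1$ and, together with the first part, that $\|K_{\alpha,\gamma}\|_{L^t}$ is finite, so the right-hand side is well defined and the stated inequality follows directly.

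The only genuinely delicate point is the verification of the exponent range in the first part: one must be careful to track the strict inequalities and confirm that the near-origin and far-field convergence conditions combine to give an open interval for $t$ rather than conflicting constraints. This is where the hypothesis $\gamma>0$ is essential, since it is exactly what separates the two endpoints $\frac{n}{n+\gamma-\alpha}$ and $\frac{n}{n-\alpha}$ and makes the admissible range nonempty. The Young's inequality step is then entirely routine once the kernel's integrability is in hand.
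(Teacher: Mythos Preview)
Your argument is correct. Note that the paper does not actually prove this theorem in the text: it is quoted from the companion paper \cite{Idris}, and the surrounding discussion only records that the second assertion follows from Young's inequality, which is exactly what you do. For the first assertion, the paper's only in-text hint is the dyadic equivalence
\[
\sum_{k\in\mathbb{Z}}\frac{(2^{k}R)^{(\alpha-n)t+n}}{(1+2^{k}R)^{\gamma t}}\ \sim\ \|K_{\alpha,\gamma}\|_{L^{t}}^{t}
\]
at the start of Section~2, which suggests the cited proof computes $\|K_{\alpha,\gamma}\|_{L^t}$ via a dyadic decomposition of $\mathbb{R}^n$ into annuli $2^{k}R\le |x|<2^{k+1}R$ and then checks convergence of the resulting series. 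Your approach instead splits at $|x|=1$ and passes directly to polar coordinates. The two routes are equivalent in substance; the dyadic formulation has the advantage of yielding the two-sided equivalence that the paper later uses repeatedly in the proofs of Theorems~\ref{p:a01}--\ref{p:a03}, whereas your split gives membership in $L^t$ more transparently but not that sharper quantitative statement.
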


Using the boundedness of Hardy-Littlewood maximal operator, we also know that
$I_{\alpha,\gamma}$ is bounded on Morrey spaces.

\begin{theorem}\cite{Idris} \label{cm:01}
For $0<\gamma $ and $0<\alpha <n$, we have
$$
\|I_{\alpha,\gamma }f\|_{L^{p_2,q_2}}\leq C\,\| K_{\alpha ,\gamma
}\|_{L^{t}}\| f\|_{L^{p_1,q_1}}
$$
for every $f\in L^{p_1,q_1}\left( \mathbb{R}^{n}\right)$ where
$1< p_1<q_1<t^{\prime }$, $\frac{n}{n+\gamma -\alpha }<t<\frac{n}{n-\alpha }$,
$\frac{1}{p_2}+1=\frac{1}{p_1}+\frac{1}{t}$, and $\frac{1}{q_2}+1=\frac{1}{q_1}+\frac{1}{t}$.
\end{theorem}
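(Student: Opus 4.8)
The plan is to reduce the whole statement to a single pointwise \emph{Hedberg-type inequality} and then integrate it against the Morrey norm. Note first that $\|K_{\alpha,\gamma}\|_{L^t}<\infty$ by Theorem \ref{cc:01}, since $\frac{n}{n+\gamma-\alpha}<t<\frac{n}{n-\alpha}$. I would first prove that, for every $f\in L^{p_1,q_1}(\mathbb{R}^n)$ and every $x$,
\[
|I_{\alpha,\gamma}f(x)| \le C\,\|K_{\alpha,\gamma}\|_{L^{t}}\,\big(Mf(x)\big)^{q_1/q_2}\,\|f\|_{L^{p_1,q_1}}^{\,q_1/t'},
\]
where, by the index relations in the hypotheses, $q_1/t' = 1 - q_1/q_2$ (this follows from $\frac{1}{q_2} = \frac{1}{q_1} + \frac{1}{t} - 1$). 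The value of this inequality is that it collects the entire dependence on $f$ into a power of $Mf$ times a constant multiple of $\|f\|_{L^{p_1,q_1}}$, with the kernel entering only through $\|K_{\alpha,\gamma}\|_{L^t}$ to the first power.

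To establish the pointwise inequality I would use the usual dyadic decomposition. Fixing $x$ and a radius $R>0$, I split $\int_{\mathbb{R}^n} K_{\alpha,\gamma}(x-y)|f(y)|\,dy$ into the annuli $\{2^{j}R \le |x-y| < 2^{j+1}R\}$, $j\in\mathbb{Z}$, grouped into a near part ($j<0$, i.e. $|x-y|<R$) and a far part ($j\ge 0$). On each annulus the radial kernel is comparable to a constant, so its size there is captured by $\big(\int_{\text{annulus}_j} K_{\alpha,\gamma}^{t}\big)^{1/t}$. For the near part I bound the local mass $\int_{\text{annulus}_j}|f|$ by $|B(x,2^{j+1}R)|\,Mf(x)$; for the far part I instead bound it, via Hölder and the definition of the Morrey norm, by $C\,\|f\|_{L^{p_1,q_1}}(2^{j}R)^{n/q_1'}$. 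In both parts I then apply a \emph{discrete} Hölder inequality over $j$ with exponents $t$ and $t'$; the $\ell^t$-factor telescopes into $\big(\sum_j \int_{\text{annulus}_j} K_{\alpha,\gamma}^t\big)^{1/t} \le \|K_{\alpha,\gamma}\|_{L^t}$, which is precisely how the global $L^t$-norm of the kernel is produced. Summing the two resulting geometric series gives
\[
|I_{\alpha,\gamma}f(x)| \le C\,\|K_{\alpha,\gamma}\|_{L^t}\Big(R^{\,n/t'}Mf(x) + R^{-n/q_2}\|f\|_{L^{p_1,q_1}}\Big),
\]
and optimizing over $R$ (the two terms balance at $R^{\,n/q_1}=\|f\|_{L^{p_1,q_1}}/Mf(x)$) yields the Hedberg-type inequality above.

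With the pointwise bound in hand, I would take the $L^{p_2,q_2}$-norm of both sides. Using the homogeneity $\|g^{\theta}\|_{L^{p,q}} = \|g\|_{L^{\theta p,\theta q}}^{\theta}$ with $\theta = q_1/q_2$, the maximal term becomes $\|Mf\|_{L^{(q_1/q_2)p_2,\,q_1}}^{q_1/q_2}$. Since $\tfrac{q_1}{q_2}p_2 \le p_1$ (equivalent to $p_1 \le q_1$, which holds), the inclusion $L^{p_1,q_1} \subseteq L^{(q_1/q_2)p_2,\,q_1}$ of Theorem \ref{T:Kb copy(a1)} together with the boundedness of $M$ on $L^{p_1,q_1}$ (valid because $1<p_1\le q_1$) give $\|Mf\|_{L^{(q_1/q_2)p_2,q_1}} \le C\|f\|_{L^{p_1,q_1}}$. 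Collecting the powers of $\|f\|_{L^{p_1,q_1}}$ and using $q_1/t' + q_1/q_2 = 1$ then produces exactly $\|I_{\alpha,\gamma}f\|_{L^{p_2,q_2}} \le C\,\|K_{\alpha,\gamma}\|_{L^t}\,\|f\|_{L^{p_1,q_1}}$.

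I expect the main obstacle to be the proof of the pointwise inequality, and within it the discrete Hölder step that converts the annulus-by-annulus kernel integrals into the single global factor $\|K_{\alpha,\gamma}\|_{L^t}$ with the correct (first) power; this is what forces the splitting exponents $n/t'$ and $-n/q_2$ and, after optimization, the precise exponent $q_1/q_2$ on $Mf$. The second delicate point is the index bookkeeping: the power of $Mf$ must be chosen so that $(Mf)^{q_1/q_2}$ lands in $L^{p_2,q_2}$ with $Mf$ itself lying in a Morrey space into which $L^{p_1,q_1}$ embeds and on which $M$ is bounded — it is precisely the hypothesis $p_1<q_1<t'$ that makes this chain of inclusions close up.
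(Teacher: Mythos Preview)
The paper does not itself prove Theorem~\ref{cm:01}; it is quoted from \cite{Idris} without proof. That said, your approach is precisely the template the paper uses to prove its own results (Theorems~\ref{p:a01}--\ref{p:a03}): split $I_{\alpha,\gamma}f(x)$ at a radius $R$ into a near and a far part, decompose dyadically, bound the near part by $C\|K_{\alpha,\gamma}\|_{L^t}\,Mf(x)\,R^{n/t'}$ and the far part by $C\|K_{\alpha,\gamma}\|_{L^t}\,\|f\|\,R^{n/t'+\beta}$ (here $\beta=-n/q_1$, so $n/t'+\beta=-n/q_2$, exactly your exponent), optimize in $R$ to obtain a Hedberg-type pointwise inequality, and finish with the boundedness of $M$ on Morrey spaces. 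Your argument is correct and matches the paper's methodology step for step.

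One minor technical remark: in your final step you invoke Theorem~\ref{T:Kb copy(a1)} for the inclusion $L^{p_1,q_1}\subseteq L^{(q_1/q_2)p_2,\,q_1}$, but the first index $(q_1/q_2)p_2=p_1(t'-q_1)/(t'-p_1)$ can fall below $1$ when $q_1$ is close to $t'$ (for instance $p_1=1.1$, $q_1=10$, $t'=11$), so that theorem as stated does not literally apply. This is harmless: the underlying inequality is just H\"older's inequality on balls, which holds regardless of whether the lower exponent is $\ge 1$. Equivalently, integrate the Hedberg inequality directly over $B(a,r)$, apply H\"older with exponent $p_1/((q_1/q_2)p_2)\ge 1$ to pass from $(Mf)^{(q_1/q_2)p_2}$ to $(Mf)^{p_1}$, and verify that the resulting power of $r$ matches $r^{n(1/q_2-1/p_2)}$ --- this bypasses the need to name the intermediate Morrey space at all.
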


In 1999, Kurata \textit{et al.} \cite{Kurata} proved the boundedness of
$W\cdot I_{\alpha,\gamma }$ on generalized Morrey spaces where $W$ is a
multiplication operator. A similar result to Kurata's can be found in \cite{Gunawan}.
In the next section, we shall reprove the boundedness of $I_{\alpha ,\gamma }$ on
generalized Morrey spaces using a Hedberg-type inequality and the boundedness of
Hardy-Littlewood maximal operator on these spaces.

\begin{theorem}
\label{nak:a01} (Nakai)
For $1<p\le \infty$ , we have
$$
\| Mf\|_{L^{p,\psi }}\leq C\,\|f\|_{L^{p,\phi }},
$$
for every $f\in L^{p,\phi }\left(\mathbb{R}^{n}\right)$.
\end{theorem}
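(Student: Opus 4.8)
The plan is to fix an arbitrary ball $B=B(a,r)$, decompose $f$ relative to it, bound $Mf$ on $B$ by a constant times $\phi(r)\|f\|_{L^{p,\phi}}$, and then pass from $\phi$ to the (generally different) weight $\psi$ through the comparison condition $\phi(r)\le C\psi(r)$ that relates the two weights in Nakai's theorem. First I would write $f=f_{1}+f_{2}$ with $f_{1}:=f\chi_{2B}$ and $f_{2}:=f\chi_{\mathbb{R}^{n}\setminus 2B}$, so that $Mf(x)\le Mf_{1}(x)+Mf_{2}(x)$ for every $x$. For the local piece I would invoke the $L^{p}$-boundedness of $M$ (valid for $1<p\le\infty$) to get $\int_{B}|Mf_{1}|^{p}\le\int_{\mathbb{R}^{n}}|Mf_{1}|^{p}\le C\int_{2B}|f|^{p}$; since $\int_{2B}|f|^{p}\le(2r)^{n}\phi(2r)^{p}\|f\|_{L^{p,\phi}}^{p}$ and $\phi(2r)\le C\phi(r)$ by the doubling condition, this yields $\bigl(\tfrac{1}{r^{n}}\int_{B}|Mf_{1}|^{p}\bigr)^{1/p}\le C\,\phi(r)\|f\|_{L^{p,\phi}}$.

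For the tail piece I would use the geometric fact that if $x\in B$ and a ball $B(z,\rho)\ni x$ meets $\mathbb{R}^{n}\setminus 2B$, then $\rho\ge c\,r$; hence $Mf_{2}(x)\le C\sup_{\rho\ge cr}\rho^{-n}\int_{|y-x|<\rho}|f(y)|\,dy$. Applying Hölder's inequality on each such ball and enlarging it to a ball concentric with $B$, I obtain $\rho^{-n}\int_{|y-x|<\rho}|f|\le C\,\rho^{-n/p}\bigl(\int_{|y-a|<2\rho}|f|^{p}\bigr)^{1/p}\le C\,\phi(2\rho)\|f\|_{L^{p,\phi}}$. Because $\phi$ is almost decreasing, $\sup_{\rho\ge cr}\phi(2\rho)\le C\phi(r)$, so $Mf_{2}(x)\le C\,\phi(r)\|f\|_{L^{p,\phi}}$ uniformly in $x\in B$, whence $\bigl(\tfrac{1}{r^{n}}\int_{B}|Mf_{2}|^{p}\bigr)^{1/p}\le C\,\phi(r)\|f\|_{L^{p,\phi}}$.

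Adding the two estimates gives $\bigl(\tfrac{1}{r^{n}}\int_{B}|Mf|^{p}\bigr)^{1/p}\le C\,\phi(r)\|f\|_{L^{p,\phi}}$; dividing by $\psi(r)$ and using $\phi(r)\le C\psi(r)$ converts $\phi$ into $\psi$, so that $\tfrac{1}{\psi(r)}\bigl(\tfrac{1}{r^{n}}\int_{B}|Mf|^{p}\bigr)^{1/p}\le C\|f\|_{L^{p,\phi}}$, and taking the supremum over all $B=B(a,r)$ gives $\|Mf\|_{L^{p,\psi}}\le C\|f\|_{L^{p,\phi}}$. The case $p=\infty$ follows the same decomposition, with the trivial bound $\tfrac{1}{|B|}\int_{B}|f|\le\|f\|_{L^{\infty}(B)}$ replacing Hölder's inequality. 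I expect the main obstacle to be the tail estimate: one must use that only balls of radius $\gtrsim r$ contribute to $Mf_{2}$ on $B$, carry out the Hölder step that turns the $L^{1}$-average into an $L^{p}$-norm over a concentric ball, and then invoke the almost-decreasing property to collapse $\sup_{\rho\ge cr}\phi(2\rho)$ to $\phi(r)$; the genuinely two-weight character of the statement enters only at the very last step, where $\phi(r)\le C\psi(r)$ is precisely what allows the output to be measured in $L^{p,\psi}$ rather than in $L^{p,\phi}$.
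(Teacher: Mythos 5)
Your argument is correct, but there is nothing in the paper to compare it against: the paper does not prove this theorem. It is stated as a quoted result (Nakai's theorem, attributed to \cite{Nakai94}) and is invoked later, in the proofs of Theorems \ref{p:a01}--\ref{p:a03}, only in the one-weight form $\| Mf\|_{L^{p_{1},\phi}}\leq C\,\| f\|_{L^{p_{1},\phi}}$, i.e.\ with $\psi=\phi$. What you have supplied is essentially the standard Chiarenza--Frasca/Nakai proof, and it is sound: the decomposition $f=f\chi_{2B}+f\chi_{\mathbb{R}^{n}\setminus 2B}$, the $L^{p}$-boundedness of $M$ plus the doubling condition for the local piece, the geometric observation that only balls of radius $\gtrsim r$ contribute to $Mf_{2}$ on $B$, and then H\"older plus the almost-decreasing property of $\phi$ for the tail --- these are exactly the right ingredients, and notably they use only the standing assumptions the paper places on $\phi$ (doubling and almost decreasing). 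Your reading of the undefined $\psi$ as any weight with $\phi(r)\leq C\psi(r)$ is the only sensible interpretation of the statement as printed, and it subsumes the case $\psi=\phi$ actually used in the paper. Two small points to tighten: (i) in the tail estimate, when you enlarge $B(x,\rho)$ to a ball centered at $a$ you only know $\rho>r/2$, so the concentric ball must have radius $\rho+r\leq 3\rho$, not $2\rho$; this is harmless, since $\phi(3\rho)\leq C\phi(r)$ still follows from the almost-decreasing property. (ii) The case $p=\infty$ is moot as stated, because the paper defines $L^{p,\phi}(\mathbb{R}^{n})$ only for $1\leq p<\infty$; your remark about replacing H\"older by the trivial $L^{\infty}$ bound is fine, but one would first have to say what $\| \cdot\|_{L^{\infty,\phi}}$ means.
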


Our results show that the norm of Bessel-Riesz operators is dominated
by the norm of their kernels on (generalized) Morrey spaces.

\section{Inequalities For $I_{\protect\alpha,\protect\gamma}$ On Generalized Morrey Spaces}

For $0<\alpha<n$ and $\gamma>0$, one may observe that the kernel $K_{\alpha,\gamma}$
belongs to Lebesgue spaces $L^t(\mathbb{R}^n)$ whenever $\frac{n}{n+\gamma -\alpha }
<t<\frac{n}{n-\alpha }$, where
$$
\sum_{k\in\mathbb{Z}}\frac{(2^{k}R)^{(\alpha-n)t+n}}{(1+2^{k}R)
^{\gamma t}} \sim \|K_{\alpha ,\gamma }\|^t_{L^{t}}
$$
(see \cite{Idris}). With this in mind, we obtain the boundedness of $I_{\alpha,\gamma}$ on
generalized Morrey spaces as in the following theorem.

\begin{theorem}
\label{p:a01}
Let $0<\gamma $ and $0<\alpha <n$. If $\phi(r) \leq Cr^{\beta }$ for every $r>0$,
$-\frac{\alpha t^{\prime }}{p_{1}}\leq \beta <-\alpha $, $1<p_{1}<t^{\prime }$, and
$\frac{n}{n+\gamma -\alpha }<t<\frac{n}{n-\alpha }$, then we have
$$
\| I_{\alpha ,\gamma }f\|_{L^{p_{2},\psi }}\leq
C\,\| K_{\alpha ,\gamma }\|_{L^{t}}\|f\|_{L^{p_{1},\phi }},
$$
for every $f\in L^{p_{1},\phi }(\mathbb{R}^{n})$ where $p_{2}=
\frac{\beta p_{1}}{\alpha +\beta }$, and $\psi(r)=\phi(r)^{p_{1}/p_{2}}$.
\end{theorem}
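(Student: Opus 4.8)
The plan is to prove the theorem via the classical dyadic decomposition of the operator combined with a Hedberg-type pointwise inequality and the boundedness of the Hardy-Littlewood maximal operator on generalized Morrey spaces (Theorem \ref{nak:a01}). The central idea is that for a fixed $x\in\mathbb{R}^{n}$ and a parameter $R>0$ to be chosen later, I would split the integral defining $I_{\alpha,\gamma}f(x)$ into a ``near'' part over $|x-y|<R$ and a ``far'' part over $|x-y|\ge R$:
\begin{equation*}
|I_{\alpha,\gamma}f(x)|\le \int_{|x-y|<R}K_{\alpha,\gamma}(x-y)|f(y)|\,dy
+\int_{|x-y|\ge R}K_{\alpha,\gamma}(x-y)|f(y)|\,dy=:\mathrm{I}+\mathrm{II}.
\end{equation*}

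First I would estimate the near part $\mathrm{I}$. Decomposing the annulus $|x-y|<R$ dyadically into shells $2^{-k-1}R\le|x-y|<2^{-k}R$ for $k\ge 0$ and using that on each shell the kernel is bounded by roughly $(2^{-k}R)^{\alpha-n}(1+2^{-k}R)^{-\gamma}$, I would bound the integral over each shell by the average of $|f|$ over the corresponding ball, hence by $Mf(x)$ times the measure of the ball times the kernel size. Summing the resulting geometric-type series in $k$ and invoking the hypothesis $\phi(r)\le Cr^{\beta}$ together with the relation $\sum_{k}\frac{(2^{k}R)^{(\alpha-n)t+n}}{(1+2^{k}R)^{\gamma t}}\sim\|K_{\alpha,\gamma}\|_{L^{t}}^{t}$ quoted before the statement, I expect the near part to be controlled by a constant multiple of $\|K_{\alpha,\gamma}\|_{L^{t}}\,R^{?}\,Mf(x)$ for an appropriate power of $R$ determined by $\alpha$, $\beta$, $t$, and $p_{1}$. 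For the far part $\mathrm{II}$, I would apply H\"older's inequality with exponents $t$ and $t'$, using $K_{\alpha,\gamma}\in L^{t}$ (Theorem \ref{cc:01}) to handle the kernel factor and a dyadic estimate of the local $L^{p_{1}}$ norms of $f$ controlled through $\|f\|_{L^{p_{1},\phi}}$ and $\phi(r)\le Cr^{\beta}$; the constraint $\beta<-\alpha$ is precisely what makes the resulting series in the outer dyadic shells convergent, yielding a bound of the form $C\|K_{\alpha,\gamma}\|_{L^{t}}R^{?}\|f\|_{L^{p_{1},\phi}}$ with a negative power of $R$.

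The heart of the argument is then the optimization over $R$: adding the two estimates gives $|I_{\alpha,\gamma}f(x)|\le C\|K_{\alpha,\gamma}\|_{L^{t}}\bigl(R^{a}Mf(x)+R^{-b}\|f\|_{L^{p_{1},\phi}}\bigr)$ for suitable positive exponents $a,b$, and choosing $R$ to balance the two terms (essentially $R\sim(\|f\|_{L^{p_{1},\phi}}/Mf(x))^{1/(a+b)}$) produces the Hedberg-type inequality
\begin{equation*}
|I_{\alpha,\gamma}f(x)|\le C\,\|K_{\alpha,\gamma}\|_{L^{t}}\,
\bigl(Mf(x)\bigr)^{p_{1}/p_{2}}\,\|f\|_{L^{p_{1},\phi}}^{1-p_{1}/p_{2}},
\end{equation*}
where the exponents are dictated by the relations $p_{2}=\beta p_{1}/(\alpha+\beta)$ and $\psi=\phi^{p_{1}/p_{2}}$. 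The main obstacle I anticipate is verifying that the arithmetic of the dyadic sums really reproduces exactly the exponent $p_{1}/p_{2}$ and the kernel norm $\|K_{\alpha,\gamma}\|_{L^{t}}$ rather than some other comparable quantity; keeping careful track of the powers of $R$ and of the interplay between the conditions $-\alpha t'/p_{1}\le\beta<-\alpha$ and $1<p_{1}<t'$ is the delicate bookkeeping step.

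Finally, I would raise the Hedberg inequality to the power $p_{2}$, integrate over an arbitrary ball $B(a,r)$, and take the appropriate supremum to pass to the $L^{p_{2},\psi}$ norm. Since $\|f\|_{L^{p_{1},\phi}}^{1-p_{1}/p_{2}}$ is a constant independent of $x$, the integral reduces to $\int_{B}(Mf)^{p_{1}}$, and applying the boundedness of $M$ on generalized Morrey spaces (Theorem \ref{nak:a01}, with the relation $\psi=\phi^{p_{1}/p_{2}}$ ensuring the weights match) yields
\begin{equation*}
\|I_{\alpha,\gamma}f\|_{L^{p_{2},\psi}}\le C\,\|K_{\alpha,\gamma}\|_{L^{t}}\,
\|f\|_{L^{p_{1},\phi}}^{1-p_{1}/p_{2}}\,\|Mf\|_{L^{p_{1},\phi}}^{p_{1}/p_{2}}
\le C\,\|K_{\alpha,\gamma}\|_{L^{t}}\,\|f\|_{L^{p_{1},\phi}},
\end{equation*}
which is the desired conclusion.
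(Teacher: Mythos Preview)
Your proposal is correct and follows essentially the same route as the paper: split $I_{\alpha,\gamma}f(x)$ into near and far parts at radius $R$, control the near part by $C\|K_{\alpha,\gamma}\|_{L^{t}}R^{n/t'}Mf(x)$ via dyadic shells and the equivalence $\sum_{k}\frac{(2^{k}R)^{(\alpha-n)t+n}}{(1+2^{k}R)^{\gamma t}}\sim\|K_{\alpha,\gamma}\|_{L^{t}}^{t}$, control the far part by $C\|K_{\alpha,\gamma}\|_{L^{t}}R^{\beta+n/t'}\|f\|_{L^{p_{1},\phi}}$ using H\"older (first with exponent $p_{1}$ on each shell, then with exponent $t$ on the dyadic sum) together with $\phi(r)\le Cr^{\beta}$ and the convergence condition $\beta t'+n<0$ (which follows from $\beta<-\alpha$ and $t'>n/\alpha$), optimize $R^{-\beta}=\|f\|_{L^{p_{1},\phi}}/Mf(x)$ to obtain the Hedberg inequality with exponents $1+\alpha/\beta=p_{1}/p_{2}$ and $-\alpha/\beta=1-p_{1}/p_{2}$, and conclude by Nakai's theorem. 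The only minor slip is that the hypothesis $\phi(r)\le Cr^{\beta}$ is not actually needed in the near-part estimate---it enters only in the far part; otherwise your bookkeeping matches the paper's argument step for step.
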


\begin{proof}
Let $0<\gamma $ and $0<\alpha <n$. Suppose that
$\phi(r) \leq Cr^{\beta }$ for every $r>0$, $-\frac{\alpha t^{\prime }}{p_{1}}
\leq \beta <-\alpha $, $1<p_{1}<t^{\prime }$, $\frac{n}{n+\gamma -\alpha }<t
<\frac{n}{n-\alpha }$. Take $f\in L^{p_{1},\phi}(\mathbb{R}^{n})$ and write
$$
I_{\alpha ,\gamma }f\left(x\right) :=I_{1}\left(x\right) +I_{2}\left(
x\right),
$$
for every $x\in \mathbb{R}^{n}$ where $I_{1}\left( x\right) :=\int_{|x-y| <R}
\frac{| x-y|^{\alpha -n}f(y) }{\left( 1+|x-y|\right) ^{\gamma }}dy$ and $I_{2}\left( x\right)
:=\int_{\left\vert x-y\right\vert \geq R}\frac{\left\vert x-y\right\vert
^{\alpha -n}f\left( y\right) }{\left( 1+\left\vert x-y\right\vert \right)
^{\gamma }}dy$, $R>0$.

Using dyadic decomposition, we have the following estimate for $I_{1}$:
\begin{eqnarray*}
\left\vert I_{1}(x) \right\vert &\leq &\sum_{k=-\infty
}^{-1}\int_{2^{k}R\leq \left\vert x-y\right\vert <2^{k+1}R}\frac{\left\vert
x-y\right\vert ^{\alpha -n}\left\vert f\left( y\right) \right\vert }{\left(
1+\left\vert x-y\right\vert \right) ^{\gamma }}dy\\
&\leq&  C_{1}\sum_{k=-\infty }^{-1}\frac{\left( 2^{k}R\right) ^{\alpha -n}}{%
\left( 1+2^{k}R\right) ^{\gamma }}\int_{2^{k}R\leq \left\vert x-y\right\vert
<2^{k+1}R}\left\vert f\left( y\right) \right\vert dy \\
&\leq&C_{2}\,Mf\left( x\right) \sum_{k=-\infty }^{-1}\frac{\left( 2^{k}R\right)
^{\alpha -n+n/t}\left( 2^{k}R\right) ^{n/t^{\prime }}}{\left(
1+2^{k}R\right) ^{\gamma }}.
\end{eqnarray*}
We then use H\"{o}lder's inequality to get
$$
\left\vert I_{1}\left( x\right) \right\vert \leq C_{2}Mf\left( x\right)
\left( \sum_{k=-\infty }^{-1}\frac{\left( 2^{k}R\right) ^{\left( \alpha
-n\right) t+n}}{\left( 1+2^{k}R\right) ^{\gamma t}}\right) ^{1/t}\left(
\sum_{k=-\infty }^{-1}\left( 2^{k}R\right) ^{n}\right) ^{1/t^{\prime }}.
$$
Because we have
$$
\left( \sum_{k=-\infty }^{-1}\frac{\left( 2^{k}R\right) ^{\left( \alpha
-n\right) t+n}}{\left( 1+2^{k}R\right) ^{\gamma t}}\right) ^{1/t}\leq \left(
\sum_{k\in\mathbb{Z}}\frac{\left( 2^{k}R\right) ^{\left( \alpha -n\right) t+n}}{\left(
1+2^{k}R\right) ^{\gamma t}}\right) ^{1/t}\sim \left\Vert K_{\alpha
,\gamma }\right\Vert _{L^{t}},
$$
we obtain $\left\vert I_{1}(x) \right\vert \leq C_{3}\left\Vert
K_{\alpha ,\gamma }\right\Vert _{L^{t}}Mf\left( x\right) R^{n/t^{\prime }}$.

To estimate $I_{2}$, we use H\"{o}lder's inequality again:
\begin{eqnarray*}
\left\vert I_{2}\left( x\right) \right\vert &\leq&  C_{4}\sum_{k=0}^{\infty }%
\frac{\left( 2^{k}R\right) ^{\alpha -n}}{\left( 1+2^{k}R\right) ^{\gamma }}%
\int_{2^{k}R\leq \left\vert x-y\right\vert <2^{k+1}R}\left\vert f\left(
y\right) \right\vert dy \\
&\leq& C_{4}\sum_{k=0}^{\infty }\frac{\left( 2^{k}R\right) ^{\alpha -n}}{%
\left( 1+2^{k}R\right) ^{\gamma }}\left( 2^{k}R\right) ^{n/p_{1}^{\prime }}
 \left( \int_{2^{k}R\leq \left\vert x-y\right\vert
<2^{k+1}R}\left\vert f\left( y\right) \right\vert ^{p_{1}}dy\right)
^{1/p_{1}}.
\end{eqnarray*}
It follows that
\begin{eqnarray*}
 \left\vert I_{2}\left( x\right) \right\vert
 &\leq& C_{5}\left\Vert f\right\Vert _{L^{p_{1},\phi }}\sum_{k=0}^{\infty }%
\frac{\left( 2^{k}R\right) ^{\alpha -n+n/t}}{\left( 1+2^{k}R\right) ^{\gamma
}}\phi \left( 2^{k}R\right) \left( 2^{k}R\right) ^{n/t^{\prime }}\\  &\leq&
C_{6}\left\Vert f\right\Vert _{L^{p_{1},\phi }}\sum_{k=0}^{\infty }%
\frac{\left( 2^{k}R\right) ^{\alpha -n+n/t}}{\left( 1+2^{k}R\right) ^{\gamma
}}\left( 2^{k}R\right) ^{\beta +n/t^{\prime }}.
\end{eqnarray*}
Another use of H\"{o}lder's inequality gives
$$
\left\vert I_{2}\left( x\right) \right\vert \leq C_{6}\left\Vert
f\right\Vert _{L^{p_{1},\phi }}\left( \sum_{k=0}^{\infty }\frac{\left(
2^{k}R\right) ^{\left( \alpha -n\right) t+n}}{\left( 1+2^{k}R\right)
^{\gamma t}}\right) ^{1/t}\left( \sum_{k=0}^{\infty }\left( 2^{k}R\right)
^{\beta t^{\prime }+n}\right) ^{1/t^{\prime }}.
$$
Because $\beta t^{\prime }+n<0$ and $\sum_{k=0}^{\infty }\frac{\left(
2^{k}R\right) ^{\left( \alpha -n\right) t+n}}{\left( 1+2^{k}R\right)
^{\gamma t}} \lesssim \| K_{\alpha ,\gamma }\|^t_{L^{t}}$, we obtain
$$
\left\vert I_{2}\left( x\right) \right\vert \leq C_{7}\left\Vert K_{\alpha
,\gamma }\right\Vert _{L^{t}}\left\Vert f\right\Vert _{L^{p_{1},\phi
}}R^{\beta }R^{n/t^{\prime }}.
$$

Summing the two estimates, we obtain
$$
| I_{\alpha ,\gamma }f(x) | \leq
C_{8}\left\Vert K_{\alpha ,\gamma }\right\Vert _{L^{t}}\left( Mf\left(
x\right) R^{n/t^{\prime }}+\left\Vert f\right\Vert _{L^{p_{1},\phi
}}R^{n/t^{\prime }+\beta }\right),
$$
for every $x\in \mathbb{R}^{n}$. Now, for each $x\in\mathbb{R}^n$,
choose $R>0$ such that $R^{-\beta }=\frac{\| f\|_{L^{p_{1},\phi }}}{Mf\left(
x\right) }$. Hence we get a Hedberg-type inequality for $I_{\alpha,\gamma}f$,
namely
$$
\left\vert I_{\alpha ,\gamma }f\left( x\right) \right\vert \leq
C_{9}\left\Vert K_{\alpha ,\gamma }\right\Vert _{L^{t}}\left\Vert
f\right\Vert _{L^{p_{1},\phi }}^{-\alpha /\beta }Mf\left( x\right)
^{1+\alpha /\beta }.
$$
Now put $p_{2}:=\frac{\beta p_{1}}{\alpha +\beta }$. For arbitrary $a\in
\mathbb{R}^n$ and $r>0$, we have
$$
\left( \int_{|x-a|<r}\left\vert I_{\alpha ,\gamma
}f(x) \right\vert^{p_{2}}dx\right)^{1/p_{2}}\leq
C_{9}\left\Vert K_{\alpha ,\gamma }\right\Vert _{L^{t}}\left\Vert
f\right\Vert _{L^{p_{1},\phi }}^{1-p_{1}/p_{2}}\left( \int_{\left\vert
x\right\vert <r}\left\vert Mf(x) \right\vert ^{p_{1}}dx\right)
^{1/p_{2} }.
$$
We divide both sides by $\phi(r)^{p_{1}/p_{2}}r^{n/p_{2}}$ to get
$$
\frac{\left( \int_{\left\vert x\right\vert <r}\left\vert I_{\alpha ,\gamma
}f\left( x\right) \right\vert ^{p_{2}}dx\right) ^{1/p_{2}}}{\psi(r)r^{n/p_{2}}}
\leq C_{9}\left\Vert K_{\alpha ,\gamma
}\right\Vert _{L^{t}}\left\Vert f\right\Vert _{L^{p_{1},\phi
}}^{1-p_{1}/p_{2}}   \frac{\left( \int_{\left\vert x\right\vert <r}
|Mf(x)|^{p_{1}}dx\right)^{( 1/p_{2}) }}{\phi(r)^{p_{1}/p_{2}}r^{n/p_{2}}},
$$
where $\psi(r) :=\phi(r)^{p_{1}/p_{2}}$.
Taking the supremum over $a\in\mathbb{R}^n$ and $r>0$, we obtain
$$
\left\Vert I_{\alpha ,\gamma }f\right\Vert _{L^{p_{2},\psi }}\leq
C_{10}\left\Vert K_{\alpha ,\gamma }\right\Vert _{L^{t}}\left\Vert
f\right\Vert _{L^{p_{1},\phi }}^{1-p_{1}/p_{2}}\left\Vert Mf\right\Vert
_{L^{p_{1},\phi }}^{p_{1}/p_{2}}.
$$
By the boundedness of the maximal operator on generalized Morrey spaces (Nakai's
Theorem), the desired result follows:
$\| I_{\alpha ,\gamma }f\|_{L^{p_{2},\psi }}\leq C\,\| K_{\alpha ,\gamma}
\|_{L^{t}}\| f\|_{L^{p_{1},\phi }}.$
\end{proof}

\bigskip

We note that from the inclusion property of Morrey spaces, we have
$\left\Vert I_{\alpha ,\gamma }f\right\Vert _{L^{s,t }} \leq
\left\Vert I_{\alpha ,\gamma }f\right\Vert _{L^{t,t }}=
\left\Vert I_{\alpha ,\gamma }f\right\Vert _{L^{t }}$ where
$1\leq s\leq t$, $\frac{n}{n+\gamma -\alpha }<t<\frac{n}{n-\alpha }$.
Now we wish to find a better upper bound for the norm of
$I_{\alpha,\gamma}$. For this purpose, we shall use the fact that the kernel
$K_{\alpha,\gamma}$ belongs to Morrey spaces.

\begin{theorem}
\label{p:a02}
Let $0<\gamma $ and $0<\alpha <n$. If $\phi(r)\leq Cr^{\beta }$ for every
$r>0$, $-\frac{\alpha t^{\prime }}{p_{1}}\leq \beta <-\alpha $,
$1<p_{1}<t^{\prime }$, $\frac{n}{n+\gamma-\alpha }<t<\frac{n}{n-\alpha }$,
then we have
$$
\left\Vert I_{\alpha ,\gamma }f\right\Vert _{L^{p_{2},\psi }}\leq
C\,\left\Vert K_{\alpha ,\gamma }\right\Vert
_{L^{s,t}}\left\Vert f\right\Vert _{L^{p_{1},\phi }},
$$
for every $f\in L^{p_{1},\phi }\left( \mathbb{R}^{n}\right) $ where
$1\leq s\leq t$, $p_{2}=\frac{\beta p_{1}}{\alpha +\beta }$, and $\psi
\left( v\right) =\phi \left( v\right) ^{p_{1}/p_{2}}$.
\end{theorem}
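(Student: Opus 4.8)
The plan is to mirror the proof of Theorem \ref{p:a01} almost verbatim, replacing the single bound $\|K_{\alpha,\gamma}\|_{L^t}$ by the Morrey norm $\|K_{\alpha,\gamma}\|_{L^{s,t}}$ wherever the $L^t$ norm of the kernel was invoked. Concretely, I would again fix $f\in L^{p_1,\phi}(\mathbb{R}^n)$, split $I_{\alpha,\gamma}f(x)=I_1(x)+I_2(x)$ at a radius $R>0$ as before, and run the dyadic decomposition on each piece. The only structural change is in how the dyadic kernel sums are controlled: where the previous proof used
$$
\left(\sum_{k\in\mathbb{Z}}\frac{(2^kR)^{(\alpha-n)t+n}}{(1+2^kR)^{\gamma t}}\right)^{1/t}\sim\|K_{\alpha,\gamma}\|_{L^t},
$$
I would instead produce a comparable dyadic characterization of $\|K_{\alpha,\gamma}\|_{L^{s,t}}$ and plug that in. Since $s\le t$ gives $\|K_{\alpha,\gamma}\|_{L^{s,t}}\le\|K_{\alpha,\gamma}\|_{L^{t,t}}=\|K_{\alpha,\gamma}\|_{L^t}$ by the inclusion in Theorem \ref{T:Kb copy(a1)}, this upper bound is genuinely sharper, which is exactly the improvement the remark preceding the statement promises.

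First I would establish the estimate for $I_1$: bounding $|f(y)|$ by $Mf(x)$ over each annulus as before yields
$$
|I_1(x)|\le C\,Mf(x)\sum_{k=-\infty}^{-1}\frac{(2^kR)^{\alpha-n+n/t}(2^kR)^{n/t'}}{(1+2^kR)^{\gamma}},
$$
and an application of Hölder's inequality in the exponents $t,t'$ separates off the kernel sum, which I would identify with $\|K_{\alpha,\gamma}\|_{L^{s,t}}$ (up to the doubling/comparison constants) rather than with $\|K_{\alpha,\gamma}\|_{L^t}$. This gives $|I_1(x)|\le C\,\|K_{\alpha,\gamma}\|_{L^{s,t}}\,Mf(x)\,R^{n/t'}$. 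Next I would treat $I_2$ over the annuli at scales $2^kR\ge R$, using Hölder with exponent $p_1$ to bring in $\|f\|_{L^{p_1,\phi}}$ and the bound $\phi(2^kR)\le C(2^kR)^\beta$, then a second Hölder step to extract the kernel sum and leave the convergent geometric tail $\sum_{k\ge0}(2^kR)^{\beta t'+n}$ (finite since $\beta t'+n<0$), producing $|I_2(x)|\le C\,\|K_{\alpha,\gamma}\|_{L^{s,t}}\,\|f\|_{L^{p_1,\phi}}\,R^{n/t'+\beta}$.

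Summing the two pieces gives the same pointwise bound as before but with the Morrey norm of the kernel,
$$
|I_{\alpha,\gamma}f(x)|\le C\,\|K_{\alpha,\gamma}\|_{L^{s,t}}\bigl(Mf(x)R^{n/t'}+\|f\|_{L^{p_1,\phi}}R^{n/t'+\beta}\bigr).
$$
Optimizing in $R$ by setting $R^{-\beta}=\|f\|_{L^{p_1,\phi}}/Mf(x)$ yields the Hedberg-type inequality $|I_{\alpha,\gamma}f(x)|\le C\,\|K_{\alpha,\gamma}\|_{L^{s,t}}\,\|f\|_{L^{p_1,\phi}}^{-\alpha/\beta}Mf(x)^{1+\alpha/\beta}$, and from there the argument is identical to the previous theorem: raise to the $p_2$ power with $p_2=\beta p_1/(\alpha+\beta)$, integrate over balls, normalize by $\psi(r)r^{n/p_2}$ with $\psi=\phi^{p_1/p_2}$, take the supremum, and finish with Nakai's Theorem \ref{nak:a01} on the boundedness of $M$. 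The main obstacle I anticipate is the first step, namely verifying the dyadic characterization $\sum_{k\in\mathbb{Z}}(2^kR)^{(\alpha-n)t+n}(1+2^kR)^{-\gamma t}\lesssim\|K_{\alpha,\gamma}\|_{L^{s,t}}^{t}$; this requires computing the Morrey norm of $K_{\alpha,\gamma}$ by estimating $\int_{|x-a|<\rho}K_{\alpha,\gamma}(x)^s\,dx$ over all balls and checking that the centered balls $a=0$ dominate, so that the Morrey-norm control of the same dyadic sum holds with the correct power. Once that comparison is secured, every remaining step is a routine transcription of the proof of Theorem \ref{p:a01}.
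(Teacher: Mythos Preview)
Your plan has a genuine gap at exactly the step you flag as ``the main obstacle''. After splitting with H\"older in the exponents $t,t'$, the kernel factor you produce is
\[
\left(\sum_{k}\frac{(2^{k}R)^{(\alpha-n)t+n}}{(1+2^{k}R)^{\gamma t}}\right)^{1/t}\sim \|K_{\alpha,\gamma}\|_{L^{t}},
\]
and you then want to bound this by $\|K_{\alpha,\gamma}\|_{L^{s,t}}$. But the Morrey inclusion gives $\|K_{\alpha,\gamma}\|_{L^{s,t}}\le \|K_{\alpha,\gamma}\|_{L^{t}}$, not the reverse; the inequality you need points the wrong way, and establishing it would erase precisely the improvement the theorem is meant to deliver. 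So keeping the $t,t'$ H\"older split and merely relabelling the kernel constant cannot work.

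The paper's proof avoids this by changing the H\"older exponents from $t,t'$ to $s,s'$. For $I_{1}$ this produces the factor
\[
\left(\sum_{k=-\infty}^{-1}\frac{(2^{k}R)^{(\alpha-n)s+n}}{(1+2^{k}R)^{\gamma s}}\right)^{1/s}
\lesssim \left(\int_{|x|<R}K_{\alpha,\gamma}^{s}(x)\,dx\right)^{1/s}
\le \|K_{\alpha,\gamma}\|_{L^{s,t}}\,R^{n/s-n/t},
\]
where the last inequality is just the definition of the $L^{s,t}$ norm on the ball $B(0,R)$. Multiplying by $R^{n/s'}$ then gives $R^{n/t'}$, and the rest of the $I_{1}$ estimate proceeds as you describe. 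For $I_{2}$ the paper likewise does \emph{not} extract a global $t$-sum via a second H\"older step; instead, on each annulus it writes
\[
\frac{(2^{k}R)^{\alpha}}{(1+2^{k}R)^{\gamma}}
\frac{\bigl(\int_{2^{k}R\le|x-y|<2^{k+1}R}dy\bigr)^{1/s}}{(2^{k}R)^{n/s}}
\lesssim
\frac{\bigl(\int_{2^{k}R\le|x-y|<2^{k+1}R}K_{\alpha,\gamma}^{s}(x-y)\,dy\bigr)^{1/s}}{(2^{k}R)^{n/s-n/t}}
\lesssim \|K_{\alpha,\gamma}\|_{L^{s,t}},
\]
again straight from the Morrey-norm definition, leaving the convergent tail $\sum_{k\ge0}(2^{k}R)^{\beta+n/t'}$. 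Once you make this switch from $t$ to $s$ in the H\"older step and invoke the Morrey norm over balls/annuli directly, your remaining outline (optimize in $R$, Hedberg-type inequality, raise to $p_{2}$, normalize, Nakai) is correct and matches the paper.
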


\begin{proof}
Let $0<\gamma $ and $0<\alpha <n$.
Suppose that $\phi(r) \leq Cr^{\beta }$ for every $r>0$, $-\frac{\alpha
t^{\prime }}{p_{1}}\leq \beta <-\alpha $, $1<p_{1}<t^{\prime }$, $
\frac{n}{n+\gamma -\alpha }<t<\frac{n}{n-\alpha }$.
As in the proof of Theorem \ref{p:a01}, we have $I_{\alpha ,\gamma }f(x) :=
I_{1}\left( x\right) +I_{2}(x)$ for every $x \in  \mathbb{R}^{n}$.
Now, we estimate $I_{1}$ using dyadic decomposition as follow:
\begin{eqnarray*}
\left\vert I_{1}\left( x\right) \right\vert &\leq &\sum_{k=-\infty
}^{-1}\int_{2^{k}R\leq \left\vert x-y\right\vert <2^{k+1}R}\frac{\left\vert
x-y\right\vert ^{\alpha -n}\left\vert f\left( y\right) \right\vert }{\left(
1+\left\vert x-y\right\vert \right) ^{\gamma }}dy \\
&\leq& C_{1}\sum_{k=-\infty }^{-1}\frac{\left( 2^{k}R\right) ^{\alpha -n}}{%
\left( 1+2^{k}R\right) ^{\gamma }}\int_{2^{k}R\leq \left\vert x-y\right\vert
<2^{k+1}R}\left\vert f\left( y\right) \right\vert dy \\
&=&C_{2}Mf\left( x\right) \sum_{k=-\infty }^{-1}\frac{\left( 2^{k}R\right)
^{\alpha -n+n/s}\left( 2^{k}R\right) ^{n/s^{\prime }}}{\left(
1+2^{k}R\right) ^{\gamma }},
\end{eqnarray*}
where $1\leq s\leq t$. By H\"{o}lder's inequality,
$$
\left\vert I_{1}\left( x\right) \right\vert \leq C_{2}Mf\left( x\right)
\left( \sum_{k=-\infty }^{-1}\frac{\left( 2^{k}R\right) ^{\left( \alpha
-n\right) s+n}}{\left( 1+2^{k}R\right) ^{\gamma s}}\right) ^{1/s}\left(
\sum_{k=-\infty }^{-1}\left( 2^{k}R\right) ^{n}\right) ^{1/s^{\prime }}.
$$
We also have $\sum_{k=-\infty }^{-1}\frac{\left( 2^{k}R\right)
^{\left( \alpha -n\right) s+n}}{\left( 1+2^{k}R\right) ^{\gamma s}}
\lesssim \int_{0<\left\vert x\right\vert <R}K_{\alpha ,\gamma
}^{s}\left( x\right) dx,$ so that
$$
\left\vert I_{1}\left( x\right) \right\vert \leq C_{3}Mf\left( x\right)
\left( \int_{0<\left\vert x\right\vert <R}K_{\alpha ,\gamma }^{s}\left(
x\right) dx\right) ^{\frac{1}{s}}R^{n/s^{\prime }}
 \leq C_{3}\left\Vert K_{\alpha ,\gamma }\right\Vert _{L^{s,t}}Mf\left(
x\right) R^{n/t^{\prime }}.
$$

Next, we estimate $I_{2}$ by using H\"{o}lder's inequality. As in the
proof of Theorem \ref{p:a01}, we obtain
\begin{eqnarray*}
\left\vert I_{2}\left( x\right) \right\vert &\leq &  C_{4}\sum_{k=0}^{\infty }
\frac{\left( 2^{k}R\right) ^{\alpha -n}}{\left( 1+2^{k}R\right) ^{\gamma }}
\left( 2^{k}R\right) ^{n/p_{1}^{\prime }}
 \left( \int_{2^{k}R\leq \left\vert x-y\right\vert
<2^{k+1}R}\left\vert f\left( y\right) \right\vert ^{p_{1}}dy\right)^{1/p_{1}}.
\end{eqnarray*}
It thus follows that
\begin{eqnarray*}
\left\vert I_{2}\left( x\right) \right\vert &\leq &C_{5}\|f\|_{L^{p_{1},\phi }}
\sum_{k=0}^{\infty }\frac{\left( 2^{k}R\right)^{\alpha }\phi(2^{k}R)}{\left(
1+2^{k}R\right) ^{\gamma }}\frac{\left( \int_{2^{k}R\leq
|x-y| <2^{k+1}R}dy\right) ^{1/s}}{\left( 2^{k}R\right)^{n/s}} \\
&\leq &C_{6}\|f\|_{L^{p_{1},\phi }}\sum_{k=0}^{\infty
}\phi(2^{k}R) \left( 2^{k}R\right)^{n/t^{\prime }}\frac{\left(
\int_{2^{k}R\leq |x-y| <2^{k+1}R}K_{\alpha,\gamma
}^{s}(x-y) dy\right)^{1/s}}{\left( 2^{k}R\right)^{n/s-n/t}}
\end{eqnarray*}%
Because $\phi(r) \leq Cr^{\beta }$ and $\frac{\left(
\int_{2^{k}R\leq |x-y| <2^{k+1}R}K_{\alpha,\gamma}^{s}(x-y) dy\right)^{1/s}}
{\left( 2^{k}R\right) ^{n/s-n/t}}
\lesssim \left\Vert K_{\alpha ,\gamma }\right\Vert _{L^{s,t}}$ for every
$k=0,1,2,\dots$, we get
\begin{eqnarray*}
\left\vert I_{2}\left( x\right) \right\vert &\leq& C_{7}\left\Vert K_{\alpha
,\gamma }\right\Vert _{L^{s,t}}\left\Vert f\right\Vert _{L^{p_{1},\phi
}}\sum_{k=0}^{\infty }\left( 2^{k}R\right) ^{\beta +n/t^{\prime }}  \\
&\leq& C_{8}\left\Vert K_{\alpha ,\gamma }\right\Vert _{L^{s,t}}\left\Vert
f\right\Vert _{L^{p_{1},\phi }}R^{\beta }R^{n/t^{\prime }}.
\end{eqnarray*}

From the two estimates, we obtain
$$
| I_{\alpha ,\gamma }f(x) | \leq
C_{9}\left\Vert K_{\alpha ,\gamma }\right\Vert _{L^{s,t}}\left( Mf(x)
R^{n/t^{\prime }}+\|f\|_{L^{p_{1},\phi}}R^{n/t^{\prime }+\beta}\right),
$$
for every $x\in \mathbb{R}^{n}$. Now, for each $x\in \mathbb{R}^n$, choose $R>0$
such that $R^{-\beta }=\frac{\|f\|_{L^{p_{1},\phi }}}{Mf(x)}$. Hence we get
$$
\left\vert I_{\alpha ,\gamma }f\left( x\right) \right\vert \leq
C_{9}\left\Vert K_{\alpha ,\gamma }\right\Vert _{L^{s,t}}\left\Vert
f\right\Vert _{L^{p_{1},\phi }}^{-\alpha /\beta }Mf\left( x\right)
^{1+\alpha /\beta }.
$$
Put $p_{2}:=\frac{\beta p_{1}}{\alpha +\beta }$. For arbitrary $a\in
\mathbb{R}^n$ and $r>0$, we have
$$
\left( \int_{\left\vert x-a\right\vert <r}\left\vert I_{\alpha ,\gamma
}f\left( x\right) \right\vert ^{p_{2}}dx\right) ^{1/p_{2}}\leq
C_{9}\left\Vert K_{\alpha ,\gamma }\right\Vert _{L^{s,t}}\left\Vert
f\right\Vert _{L^{p_{1},\phi }}^{1-p_{1}/p_{2}}\left( \int_{\left\vert
x-a\right\vert <r}\left\vert Mf\left( x\right) \right\vert ^{p_{1}}dx\right)
^{1/p_{2} }.
$$
Divide both sides by
$\phi \left( r\right) ^{p_{1}/p_{2}}r^{n/p_{2}}$ and take the supremum
over $a\in\mathbb{R}^n$ and $r>0$ to get
$$
\left\Vert I_{\alpha ,\gamma }f\right\Vert _{L^{p_{2},\psi }}\leq
C_{10}\left\Vert K_{\alpha ,\gamma }\right\Vert _{L^{s,t}}\left\Vert
f\right\Vert _{L^{p_{1},\phi }}^{1-p_{1}/p_{2}}\left\Vert Mf\right\Vert
_{L^{p_{1},\phi }}^{p_{1}/p_{2}},
$$
where $\psi(r) :=\phi(r)^{p_{1}/p_{2}}$. With the boundedness of the maximal
operator on generalized Morrey spaces (Nakai's
Theorem), we obtain the desired result: $\left\Vert I_{\alpha ,\gamma }f\right\Vert
_{L^{p_{2},\psi }}\leq C_{p_{1},\phi }\left\Vert K_{\alpha ,\gamma
}\right\Vert _{L^{s,t}}\left\Vert f\right\Vert _{L^{p_{1},\phi }}$.
\end{proof}

\bigskip

Note that by Theorem \ref{p:a02} and the inclusion of Morrey spaces, we
recover Theorem \ref{p:a01}:
$$
\left\Vert I_{\alpha ,\gamma }f\right\Vert
_{L^{p_{2},\psi }}\leq C\,\left\Vert K_{\alpha ,\gamma
}\right\Vert _{L^{s,t}}\left\Vert f\right\Vert _{L^{p_{1},\phi }}\leq
C\,\left\Vert K_{\alpha ,\gamma
}\right\Vert _{L^{t}}\left\Vert f\right\Vert _{L^{p_{1},\phi }}.
$$

We wish to obtain a better estimate.  The following lemma presents that
the Bessel-Riesz kernels belong to generalized Morrey space $L^{s,\sigma}
\left( \mathbb{R}^{n}\right)$ for some $s \geq 1$ and a suitable function $\sigma$.

\begin{lemma}\label{lemgbr:02}
Suppose that  $0<\gamma$ and $0<\alpha<n$. If $ \sigma: \mathbb{R}^{+}\rightarrow \mathbb{R}^{+}$
satisfies $\int_{0<r \leq R} r^{(\alpha-n)s+n-1}dr \leq C \sigma^s(R) R^n$
for every $R>0$, then  $K_{\alpha,\gamma} \in L^{s,\sigma }\left( \mathbb{R}^{n}\right)$.
\end{lemma}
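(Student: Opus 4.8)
The plan is to bound, uniformly in the center $a\in\mathbb{R}^n$ and the radius $r>0$, the local integral $\int_{|x-a|<r}K_{\alpha,\gamma}(x)^s\,dx$ by a constant multiple of $\sigma^s(r)\,r^n$; granting this, dividing by $\sigma(r)\,r^{n/s}$ and taking the supremum over $a$ and $r$ yields $\|K_{\alpha,\gamma}\|_{L^{s,\sigma}}<\infty$, which is the assertion. As a preliminary reduction I would discard the Bessel factor through the crude pointwise bound
\[
K_{\alpha,\gamma}(x)^s=\frac{|x|^{(\alpha-n)s}}{(1+|x|)^{\gamma s}}\le |x|^{(\alpha-n)s},
\]
which holds because $\gamma>0$ forces $(1+|x|)^{\gamma s}\ge 1$. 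This is consistent with the hypothesis, whose left-hand side already involves only the power $|x|^{(\alpha-n)s}$, and it reduces matters to controlling $\int_{|x-a|<r}|x|^{(\alpha-n)s}\,dx$.

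The crucial point is that the integrand $x\mapsto|x|^{(\alpha-n)s}$ is radially decreasing, since $\alpha-n<0$. For a radially decreasing integrand the integral over a ball of fixed radius is maximized when the ball is centered at the origin, so that
\[
\sup_{a\in\mathbb{R}^n}\int_{|x-a|<r}|x|^{(\alpha-n)s}\,dx=\int_{|x|<r}|x|^{(\alpha-n)s}\,dx.
\]
I expect this centering step to be the only genuine subtlety; it is a standard rearrangement (bathtub) fact, namely that among all balls of radius $r$ the one centered at the singularity of a radially decreasing function captures the most mass. One could equally run the argument with $K_{\alpha,\gamma}$ itself, which is radially decreasing because $\frac{d}{d\rho}\log\frac{\rho^{\alpha-n}}{(1+\rho)^\gamma}=\frac{\alpha-n}{\rho}-\frac{\gamma}{1+\rho}<0$ for $\rho>0$.

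Finally I would pass to polar coordinates on the origin-centered ball, writing
\[
\int_{|x|<r}|x|^{(\alpha-n)s}\,dx=c_n\int_{0}^{r}\rho^{(\alpha-n)s+n-1}\,d\rho,
\]
where $c_n$ is the surface measure of the unit sphere in $\mathbb{R}^n$; the integral converges at $\rho=0$ exactly when $s<\frac{n}{n-\alpha}$, which is built into the hypothesis. The radial integral is precisely the quantity appearing in the assumption $\int_{0<r\le R}r^{(\alpha-n)s+n-1}\,dr\le C\,\sigma^s(R)\,R^n$, so with $R=r$ the three steps combine to give $\int_{|x-a|<r}K_{\alpha,\gamma}(x)^s\,dx\le C\,c_n\,\sigma^s(r)\,r^n$ for every $a$ and $r$. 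Hence $\frac{1}{\sigma(r)}\bigl(\frac{1}{r^n}\int_{|x-a|<r}K_{\alpha,\gamma}^s\bigr)^{1/s}\le(C\,c_n)^{1/s}$ uniformly, and therefore $K_{\alpha,\gamma}\in L^{s,\sigma}(\mathbb{R}^n)$, as claimed.
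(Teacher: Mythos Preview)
Your argument is correct and follows the same route as the paper: drop the Bessel factor via $(1+|x|)^{\gamma s}\ge 1$, switch to polar coordinates, and invoke the hypothesis on the radial integral. In fact your proof is slightly more complete, since you justify taking the supremum over \emph{all} centers $a\in\mathbb{R}^n$ via the radial-decrease (bathtub) observation, whereas the paper only checks the origin-centered balls and tacitly relies on the same monotonicity.
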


\begin{proof}
Suppose that the hypothesis holds. We observe that
\begin{eqnarray*}
\int_{|x-0| \leq R} K_{\alpha,\gamma}^s (x)dx &=& \int_{|x|\leq R} \frac{|x|^{(\alpha-n) s}}{ (1+|x|)^{\gamma s}}dx
\leq  C  \int_{0<r \leq R}  r^{(\alpha-n)s+n-1}dr  \leq C \sigma^s(R) R^n.
\end{eqnarray*}
We divide both sides of the inequality by $\sigma^s(R)R^n$ and take $s^{\rm th}$-root to obtain
$\frac{\left(\int_{|x-0| \leq R} K_{\alpha,\gamma}^s (x)dx  \right)^{1/s}}{\sigma(R)R^{n/s}} \leq C^{1/s}.$
Now, taking the supremum over $R>0$, we have
$\sup\limits_{R>0}\frac{\left(\int_{|x-0| \leq R} K_{\alpha,\gamma}^s (x)dx
\right)^{1/s}}{\sigma(R)R^{n/s}} <\infty.$ Hence $K_{\alpha, \gamma} \in L^{s,\sigma }\left( \mathbb{R}^{n}\right) $.
\end{proof}

By the hypothesis of Lemma \ref{lemgbr:02} we also obtain
$  \frac{\left( \int_{2^{k}R < |x| \leq 2^{k+1}R } K_{\rho,\gamma}^s (x) dx
\right)^{1/s}}{\sigma ( 2^{k}R) \left(  2^{k}R\right)^{n/s}} \lesssim
\left\Vert K_{\alpha ,\gamma }\right\Vert _{L^{s,\sigma}}$ for every integer $k$ and $R>0$.
Moreover, $\frac{ \left(\sum_{k=-1 }^{-\infty } K_{\alpha ,\gamma }^{s}
\left(  2^{k}R\right) \left(  2^{k}R\right)^n \right)^{1/s}}{\sigma (R)
\left(  R\right)^{n/s}} \lesssim \left\Vert K_{\alpha ,\gamma }\right\Vert_{L^{s,\sigma}}$ holds
for every $R>0$.
One may observe that $1 \leq s \leq \frac{n \ln R_1 }{- \ln \sigma (R_1) }$ for every $R_1>1$.
For $\sigma(R)=R ^{-n/t}$, this inequality reduces to $1 \leq s \leq t$.

We shall now use the lemma to prove the following theorem.


\begin{theorem}
\label{p:a03} Suppose that $\sigma :\mathbb{R}^{+}\rightarrow \mathbb{R}^{+}$
satisfies the doubling condition and $\sigma(r) \leq Cr^{-\alpha }$ for every $r>0$,
so that $K_{\alpha ,\gamma }\in L^{s,\sigma }\left(
\mathbb{R}^{n}\right)$ for $1\leq s<\frac{n}{n-\alpha }$, where $0<\alpha <n$ and
$\gamma >0$. If $\ \phi(r) \leq
Cr^{\beta }$ for every $r>0$, where  $-\frac{n}{p_{1}}<\beta <-\alpha$,
then we have
\[
\left\Vert I_{\alpha ,\gamma }f\right\Vert _{L^{p_{2},\psi }}\leq
C_{p_{1},\phi }\left\Vert K_{\alpha ,\gamma }\right\Vert _{L^{s,\sigma
}}\left\Vert f\right\Vert _{L^{p_{1},\phi }}
\]
for every $f\in L^{p_{1},\phi }\left( \mathbb{R}^{n}\right) $, where $1<p_{1}<
\frac{n}{\alpha }$, $p_{2}=\frac{\beta p_{1}}{\beta
+n-\alpha }$ and $\psi(r) =\phi(r) ^{p_{1}/p_{2}}$.
\end{theorem}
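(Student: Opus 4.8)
The plan is to run the same three-step template used in the proofs of Theorems \ref{p:a01} and \ref{p:a02}, with the single new ingredient that the kernel sums are now measured by the \emph{generalized} Morrey norm $\|K_{\alpha,\gamma}\|_{L^{s,\sigma}}$ instead of $\|K_{\alpha,\gamma}\|_{L^t}$ or $\|K_{\alpha,\gamma}\|_{L^{s,t}}$, the control being supplied by Lemma \ref{lemgbr:02} and the remark following it. First I would fix $f\in L^{p_1,\phi}(\mathbb{R}^n)$ and, for each $x$ and each $R>0$, split $I_{\alpha,\gamma}f(x)=I_1(x)+I_2(x)$ into the near part $|x-y|<R$ and the far part $|x-y|\ge R$, decomposing each dyadically over the shells $2^kR\le|x-y|<2^{k+1}R$.

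For $I_1$ (shells $k\le -1$) I would replace $\int_{\text{shell}}|f|$ by $Mf(x)(2^kR)^n$ and the kernel by its value $K_{\alpha,\gamma}(2^kR)$ on that shell, so that $|I_1(x)|\lesssim Mf(x)\sum_{k\le -1}K_{\alpha,\gamma}(2^kR)(2^kR)^n$. Writing each summand as $[K_{\alpha,\gamma}(2^kR)(2^kR)^{n/s}]\,[(2^kR)^{n/s'}]$ and applying H\"older with exponents $s,s'$, the first factor is handled by the inner-shell bound $(\sum_{k\le -1}K_{\alpha,\gamma}^s(2^kR)(2^kR)^n)^{1/s}\lesssim \|K_{\alpha,\gamma}\|_{L^{s,\sigma}}\sigma(R)R^{n/s}$ from the remark after Lemma \ref{lemgbr:02}, while $(\sum_{k\le -1}(2^kR)^n)^{1/s'}\sim R^{n/s'}$. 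This gives $|I_1(x)|\lesssim \|K_{\alpha,\gamma}\|_{L^{s,\sigma}}\,Mf(x)\,\sigma(R)R^n$, and the hypothesis $\sigma(r)\le Cr^{-\alpha}$ turns it into $|I_1(x)|\lesssim \|K_{\alpha,\gamma}\|_{L^{s,\sigma}}\,Mf(x)\,R^{n-\alpha}$.

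For $I_2$ (shells $k\ge 0$) I would apply H\"older in the inner integral with exponents $p_1,p_1'$ and use $(\int_{|x-y|<2^{k+1}R}|f|^{p_1})^{1/p_1}\lesssim \|f\|_{L^{p_1,\phi}}\phi(2^kR)(2^kR)^{n/p_1}$, reducing $I_2$ to $\|f\|_{L^{p_1,\phi}}\sum_{k\ge 0}K_{\alpha,\gamma}(2^kR)(2^kR)^n\phi(2^kR)$. Exactly as in Theorem \ref{p:a02}, I would rewrite $K_{\alpha,\gamma}(2^kR)(2^kR)^n\sim (2^kR)^{n/s'}(\int_{2^kR\le|x-y|<2^{k+1}R}K_{\alpha,\gamma}^s(x-y)\,dy)^{1/s}$ and invoke the per-shell bound $(\int_{2^kR\le|x-y|<2^{k+1}R}K_{\alpha,\gamma}^s)^{1/s}\lesssim \|K_{\alpha,\gamma}\|_{L^{s,\sigma}}\sigma(2^kR)(2^kR)^{n/s}$ from the same remark, arriving at $|I_2(x)|\lesssim \|K_{\alpha,\gamma}\|_{L^{s,\sigma}}\|f\|_{L^{p_1,\phi}}\sum_{k\ge 0}\sigma(2^kR)(2^kR)^n\phi(2^kR)$. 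Feeding in $\sigma(r)\le Cr^{-\alpha}$ and $\phi(r)\le Cr^\beta$, the summand becomes $(2^kR)^{n-\alpha+\beta}$, a geometric series, so that $|I_2(x)|\lesssim \|K_{\alpha,\gamma}\|_{L^{s,\sigma}}\|f\|_{L^{p_1,\phi}}R^{n-\alpha+\beta}$.

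Adding the two estimates yields $|I_{\alpha,\gamma}f(x)|\lesssim \|K_{\alpha,\gamma}\|_{L^{s,\sigma}}(Mf(x)R^{n-\alpha}+\|f\|_{L^{p_1,\phi}}R^{n-\alpha+\beta})$, and choosing $R$ with $R^{-\beta}=\|f\|_{L^{p_1,\phi}}/Mf(x)$ balances the two terms to produce the Hedberg-type inequality $|I_{\alpha,\gamma}f(x)|\lesssim \|K_{\alpha,\gamma}\|_{L^{s,\sigma}}\|f\|_{L^{p_1,\phi}}^{1-p_1/p_2}Mf(x)^{p_1/p_2}$, since the resulting exponent $1+\frac{n-\alpha}{\beta}=\frac{\beta+n-\alpha}{\beta}$ is precisely $p_1/p_2$ for $p_2=\frac{\beta p_1}{\beta+n-\alpha}$. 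Raising to the power $p_2$, integrating over $|x-a|<r$, dividing by $\psi(r)r^{n/p_2}=\phi(r)^{p_1/p_2}r^{n/p_2}$ and taking the supremum collapses the claim to $\|I_{\alpha,\gamma}f\|_{L^{p_2,\psi}}\lesssim \|K_{\alpha,\gamma}\|_{L^{s,\sigma}}\|f\|_{L^{p_1,\phi}}^{1-p_1/p_2}\|Mf\|_{L^{p_1,\phi}}^{p_1/p_2}$, whereupon Nakai's Theorem \ref{nak:a01} closes the argument. I expect the main obstacle to be the bookkeeping of the far part rather than any new idea: the geometric series $\sum_{k\ge 0}(2^kR)^{n-\alpha+\beta}$ converges only when $n-\alpha+\beta<0$, i.e. $\beta<\alpha-n$, which is exactly the condition that keeps $p_2=\frac{\beta p_1}{\beta+n-\alpha}$ positive, so at the outset I would verify that the hypotheses on $\beta$ and the admissible range of $s$ indeed force both the summability of $\sigma(2^kR)(2^kR)^n\phi(2^kR)$ and the matching of the optimizing exponent with $p_1/p_2$.
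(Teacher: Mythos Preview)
Your proposal is correct and follows essentially the same route as the paper's own proof: the same near/far split, the same dyadic decomposition, the same use of the remark after Lemma~\ref{lemgbr:02} to control the kernel sums by $\|K_{\alpha,\gamma}\|_{L^{s,\sigma}}$, the same Hedberg-type optimization in $R$, and the same appeal to Nakai's theorem at the end. Your flag about the geometric series $\sum_{k\ge 0}(2^kR)^{n-\alpha+\beta}$ is well placed: the paper uses this summation without comment, and the condition $n-\alpha+\beta<0$ is not literally contained in the stated hypothesis $-n/p_{1}<\beta<-\alpha$ (it fails, for instance, when $n>2\alpha$ and $\beta$ is only slightly below $-\alpha$), so it must be read as an implicit consequence of requiring $p_{2}=\frac{\beta p_{1}}{\beta+n-\alpha}>0$.
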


\begin{proof}
Let $0<\gamma $ and $0<\alpha <n$. Suppose that $\sigma :\mathbb{R}^{+}\rightarrow
\mathbb{R}^{+}$ satisfies the doubling condition and $\sigma(r) \leq
Cr^{-\alpha}$ for every $r>0$, such that $K_{\alpha ,\gamma }\in L^{s,\sigma }\left(
\mathbb{R}^{n}\right) $ for $1\leq s<\frac{n}{n-\alpha }$. Suppose also that
$\phi(r) \leq Cr^{\beta }$ for every $r>0$, where $-\frac{n}{p_{1}}<\beta
<-\alpha $, $1<p_{1}<\frac{n}{\alpha }$. As in the proof of Theorem \ref{p:a01},
we write $I_{\alpha ,\gamma }f\left( x\right) :=I_{1}\left( x\right) +I_{2}\left(
x\right)$ for every $x \in  \mathbb{R}^{n}$.
As usual, we estimate $I_{1}$ by using dyadic decomposition:
\begin{eqnarray*}
\left\vert I_{1}\left( x\right) \right\vert  &\leq &\sum_{k=-\infty
}^{-1}\int_{2^{k}R\leq \left\vert x-y\right\vert <2^{k+1}R}\frac{\left\vert
x-y\right\vert ^{\alpha -n}\left\vert f\left( y\right) \right\vert }{\left(
1+\left\vert x-y\right\vert \right) ^{\gamma }}dy \\
&\leq &C_{1}\sum_{k=-\infty }^{-1}\frac{\left( 2^{k}R\right) ^{\alpha -n}}{%
\left( 1+2^{k}R\right) ^{\gamma }}\int_{2^{k}R\leq \left\vert x-y\right\vert
<2^{k+1}R}\left\vert f\left( y\right) \right\vert dy \\
&=&C_{2}Mf\left( x\right) \sum_{k=-\infty }^{-1}\frac{\left( 2^{k}R\right)
^{\alpha -n+n/s}\left( 2^{k}R\right) ^{n/s^{\prime }}}{\left(
1+2^{k}R\right) ^{\gamma }}
\end{eqnarray*}
By using H\"{o}lder inequality, we obtain
$$
\left\vert I_{1}\left( x\right) \right\vert \leq C_{2}Mf\left( x\right)
\left( \sum_{k=-\infty }^{-1}\frac{\left( 2^{k}R\right) ^{\left( \alpha
-n\right) s+n}}{\left( 1+2^{k}R\right) ^{\gamma s}}\right) ^{1/s}\left(
\sum_{k=-\infty }^{-1}\left( 2^{k}R\right) ^{n}\right) ^{1/s^{\prime }}.
$$
But $\sum_{k=-\infty }^{-1}\frac{\left( 2^{k}R\right)
^{\left( \alpha -n\right) s+n}}{\left( 1+2^{k}R\right) ^{\gamma s}}\lesssim
\int_{0<\left\vert x\right\vert <R}K_{\alpha ,\gamma
}^{s}\left( x\right) dx$, and so we get
\begin{eqnarray*}
\left\vert I_{1}\left( x\right) \right\vert  &\leq &C_{2}Mf\left( x\right)
\left( \int_{0<\left\vert x\right\vert <R}K_{\alpha ,\gamma }^{s}\left(
x\right) dx\right) ^{\frac{1}{s}}R^{n/s^{\prime }} \\
&\leq &C_{2}\left\Vert K_{\alpha ,\gamma }\right\Vert _{L^{s,\sigma
}}Mf\left( x\right) \sigma \left( R\right) R^{n} \\
&\leq &C_{3}\left\Vert K_{\alpha ,\gamma }\right\Vert _{L^{s,\sigma
}}Mf\left( x\right) R^{n-\alpha }.
\end{eqnarray*}

Next, we estimate $I_{2}$ as follows:
\begin{eqnarray*}
\left\vert I_{2}\left( x\right) \right\vert &\leq &C_{4}\sum_{k=0}^{\infty }%
\frac{\left( 2^{k}R\right) ^{\alpha -n}}{\left( 1+2^{k}R\right) ^{\gamma }}%
\int_{2^{k}R\leq \left\vert x-y\right\vert <2^{k+1}R}\left\vert f\left(
y\right) \right\vert dy \\
&\leq &C_{4}\sum_{k=0}^{\infty }\frac{\left( 2^{k}R\right) ^{\alpha -n}}{%
\left( 1+2^{k}R\right) ^{\gamma }}\left( 2^{k}R\right) ^{n/p_{1}^{\prime }}
 \left( \int_{2^{k}R\leq \left\vert x-y\right\vert
<2^{k+1}R}\left\vert f\left( y\right) \right\vert ^{p_{1}}dy\right)
^{1/p_{1}} \\
&\leq &C_{5}\left\Vert f\right\Vert _{L^{p_{1},\phi }}\sum_{k=0}^{\infty }%
\frac{\left( 2^{k}R\right) ^{\alpha }\phi \left( 2^{k}R\right) \left( 2^{k}R\right) ^n }{\left(
1+2^{k}R\right) ^{\gamma }}\frac{\left( \int_{2^{k}R\leq \left\vert
x-y\right\vert <2^{k+1}R}dy\right) ^{1/s}}{\left( 2^{k}R\right) ^{n/s}} \\
&\leq &C_{6}\left\Vert f\right\Vert _{L^{p_{1},\phi }}\sum_{k=0}^{\infty
}\left( 2^{k}R\right) ^{n-\alpha +\beta }\frac{\left( \int_{2^{k}R\leq
\left\vert x-y\right\vert <2^{k+1}R}K_{\alpha ,\gamma }^{s}\left( x-y\right)
dy\right) ^{1/s}}{\sigma \left( 2^{k}R\right) \left( 2^{k}R\right) ^{n/s}}
\end{eqnarray*}%
Because $\frac{\int_{2^{k}R\leq \left\vert x-y\right\vert
<2^{k+1}R}K_{\alpha ,\gamma }^{s}\left( x-y\right) dy}{\sigma
\left( 2^{k}R\right) \left( 2^{k}R\right) ^{n}}\lesssim \|
K_{\alpha ,\gamma }\|^s_{L^{s,\sigma }}$ for every $k=0,1,2,\dots$, we obtain
\begin{eqnarray*}
\left\vert I_{2}\left( x\right) \right\vert &\leq &C_{6}\left\Vert K_{\alpha
,\gamma }\right\Vert _{L^{s,\sigma }}\left\Vert f\right\Vert _{L^{p_{1},\phi
}}\sum_{k=0}^{\infty }\left( 2^{k}R\right) ^{n-\alpha +\beta } \\
&\leq &C_{7}\left\Vert K_{\alpha ,\gamma }\right\Vert _{L^{s,\sigma
}}\left\Vert f\right\Vert _{L^{p_{1},\phi }}R^{n-\alpha +\beta }.
\end{eqnarray*}

It follows from the above estimates for $I_1$ and $I_2$ that
$$
\left\vert I_{\alpha ,\gamma }f\left( x\right) \right\vert \leq
C_{8}\left\Vert K_{\alpha ,\gamma }\right\Vert _{L^{s,\sigma }}\left(
Mf\left( x\right) R^{n-\alpha }+\left\Vert f\right\Vert _{L^{p_{1},\phi
}}R^{n-\alpha +\beta }\right)
$$
for every $x\in \mathbb{R}^{n}$. Now, for each $x\in\mathbb{R}^n$, choose
$R>0$ such that $R^{-\beta }=\frac{\| f\|_{L^{p_{1},\phi }}}{Mf\left(
x\right) }$, whence
$$
\left\vert I_{\alpha,\gamma }f\left( x\right) \right\vert \leq
C_{9}\left\Vert K_{\alpha ,\gamma }\right\Vert _{L^{s,\sigma }}\left\Vert
f\right\Vert _{L^{p_{1},\phi }}^{\left( \alpha -n\right) /\beta }Mf\left(
x\right) ^{1+\left( n-\alpha \right) /\beta }.
$$
Put $p_{2}:=\frac{\beta p_{1}}{\beta +n-\alpha }$. For arbitrary $a\in
\mathbb{R}^n$ and $r>0$, we have
\[
\left( \int_{\left\vert x-a\right\vert <r}\left\vert I_{\alpha ,\gamma
}f\left( x\right) \right\vert ^{p_{2}}dx\right) ^{1/p_{2}}\leq
C_{9}\left\Vert K_{\alpha ,\gamma }\right\Vert _{L^{s,\sigma }}\left\Vert
f\right\Vert _{L^{p_{1},\phi }}^{1-p_{1}/p_{2}}\left( \int_{\left\vert
x-a\right\vert <r}\left\vert Mf\left( x\right) \right\vert ^{p_{1}}dx\right)
^{\left( 1/p_{2}\right) }.
\]
Divide the both sides by $\phi \left( r\right) ^{p_{1}/p_{2}}r^{n/p_{2}}$ to get
\begin{eqnarray*}
\frac{\left( \int_{\left\vert x\right\vert <r}\left\vert I_{\alpha ,\gamma
}f\left( x\right) \right\vert ^{p_{2}}dx\right) ^{1/p_{2}}}{\psi \left(
r\right) r^{n/p_{2}}} &=&\frac{\left( \int_{\left\vert x\right\vert <r}\left\vert I_{\alpha ,\gamma
}f\left( x\right) \right\vert ^{p_{2}}dx\right) ^{1/p_{2}}}{\phi \left(
r\right) ^{p_{1}/p_{2}}r^{n/p_{2}}} \\
&\leq& C_{9}\left\Vert K_{\alpha ,\gamma
}\right\Vert _{L^{s,\sigma }}\left\Vert f\right\Vert _{L^{p_{1},\phi
}}^{1-p_{1}/p_{2}} \frac{\left( \int_{\left\vert x\right\vert <r}\left\vert Mf\left(
x\right) \right\vert ^{p_{1}}dx\right) ^{\left( 1/p_{2}\right) }}{\phi
\left( r\right) ^{p_{1}/p_{2}}r^{n/p_{2}}},
\end{eqnarray*}
where $\psi(r) :=\phi(r)^{p_{1}/p_{2}}$. Finally, take the supremum
over $a\in\mathbb{R}^n$ and $r>0$ to obtain
$$
\left\Vert I_{\alpha,\gamma }f\right\Vert _{L^{p_{2},\psi }}\leq
C_{10}\left\Vert K_{\alpha,\gamma }\right\Vert _{L^{s,\sigma }}\left\Vert
f\right\Vert _{L^{p_{1},\phi }}^{1-p_{1}/p_{2}}\left\Vert Mf\right\Vert
_{L^{p_{1},\phi }}^{p_{1}/p_{2}}.
$$
Because the maximal operator is bounded on generalized Morrey spaces (Nakai's
Theorem), we conclude that $\left\Vert I_{\alpha ,\gamma }f\right\Vert
_{L^{p_{2},\psi }}\leq C_{p_{1},\phi }\left\Vert K_{\alpha ,\gamma
}\right\Vert _{L^{s,\sigma }}\left\Vert f\right\Vert _{L^{p_{1},\phi }}$.
\end{proof}

\section{Concluding Remarks}

The results presented in this paper, namely Theorems \ref{p:a01}, \ref{p:a02},
and \ref{p:a03}, extend the results on the boundedness of Bessel-Riesz
operators on Morrey spaces \cite{Idris}. Similar to Gunawan-Eridani's result for $I_{\alpha}$,
Theorems \ref{p:a01}, \ref{p:a02}, and \ref{p:a03} ensures that
$I_{\alpha,\gamma}: L^{p_{1},\phi }\left(\mathbb{R}^{n}\right) \rightarrow
L^{p_{2},\phi^{p_1/p_2} }\left(\mathbb{R}^{n}\right)$. Notice that if we have
$\sigma:\mathbb{R}^+ \to \mathbb{R}^+$ such that for $t \in (\frac{n}{n+\gamma-\alpha},
\frac{n}{n-\alpha})$, then $ R^{-n/t}<\sigma(R)$ holds for every $R>0$, then Theorem
\ref{p:a03} gives a better estimate than Theorem \ref{p:a02}. Now, if we define
$\sigma(R) :=\left(1+R^{n/t_1}\right)R^{-n/t}$ for some $t_1>t$,
then $\left\Vert K_{\alpha ,\gamma}\right\Vert _{L^{s,\sigma }}<\left\Vert K_{\alpha ,\gamma
}\right\Vert _{L^{s,t }}$. By Theorem \ref{p:a02} and the inclusion property of
Morrey spaces, we obtain
\begin{eqnarray*}
\| I_{\alpha ,\gamma }f\|_{L^{p_{2},\psi }}  &\leq&
C\,\| K_{\alpha ,\gamma}\|_{L^{s,\sigma }}\| f\|_{L^{p_{1},\phi }}\\
&<&  C\,\| K_{\alpha ,\gamma}\|_{L^{s,t }}\| f\|_{L^{p_{1},\phi }}\\
&\leq&  C\,\| K_{\alpha ,\gamma}\|_{L^{t }}\| f\|_{L^{p_{1},\phi }}.
\end{eqnarray*}
We can therefore say that Theorem \ref{p:a03} gives the best estimate among the three.
Furthermore, we have also shown that, in each theorem, the norm of Bessel-Riesz
operators on generalized Morrey spaces is dominated by that of Bessel-Riesz kernels.


\bigskip

\noindent{\bf Acknowledgements}. The first and second authors are supported by
ITB Research \& Innovation Program 2015.


\begin{thebibliography}{9}
\bibitem{Adams} \textsc{D.R. Adams}, ``A note on Riesz potentials'', \emph{
Duke Math. J}. \textbf{42} (1975), 765-778.

\bibitem{Chiarenza} \textsc{F. Chiarenza and M. Frasca}, ``Morrey spaces and
Hardy-Littlewood maximal function'', \emph{Rend. Mat}. \textbf{7}
(1987), 273-279.

\bibitem{Gunawan} \textsc{H. Gunawan and Eridani}, ``Fractional integrals and
generalized Olsen inequalities'', \emph{Kyungpook Math. J.} \textbf{1} (2009) 49.

\bibitem{Grafakos} \textsc{L. Grafakos}, \emph{Classical Fourier Analysis},
Graduate Texts in Matemathics, Vol. 249, Springer, New York, 2008.

\bibitem{Hardy1} \textsc{G.H. Hardy and J.E. Littlewood}, ``Some properties
of fractional integrals. I'', \emph{Math. Zeit}. \textbf{27}
(1927), 565-606.

\bibitem{Hardy2} \textsc{G.H. Hardy and J.E. Littlewood}, ``Some properties
of fractional integrals. II'', \emph{Math. Zeit}. \textbf{34}
(1932), 403-439.

\bibitem{Idris} \textsc{M. Idris, H. Gunawan, J. Lindiarni, and Eridani},
``The boundedness of Bessel-Riesz operators on Morrey spaces'', submitted to
the Proceedings of ISCPMS 2015.

\bibitem{Kurata} \textsc{K. Kurata, S. Nishigaki and S. Sugano},
``Boundedness of integral operator on generalized Morrey spaces and
its aplication to Schr\"{o}dinger operator'', \emph{Proc. Amer.
Math. Soc}. \textbf{128} (1999), 587-602.

\bibitem{Nakai94} \textsc{E. Nakai}, ``Hardy-Littlewood maximal operator,
singular integral operator and Riesz potentials on generalized
Morrey spaces'', \emph{Math. Nachr}. \textbf{166} (1994), 95-103.

\bibitem{Peetre} \textsc{J. Peetre}, ``On the theory of $\mathcal{L}%
_{p,\lambda }$ spaces'', \emph{J. Funct. Anal}. \textbf{4} (1969),
71-87.

\bibitem{Sobolev} \textsc{S.L. Sobolev}, ``On a theorem in functional
analysis'' (Russian), \emph{Math. Sob}. \textbf{46} (1938),
471-497. [English translation in \emph{Amer. Math. Soc. Transl.
ser.2.} \textbf{34} (1963), 39-68].

\bibitem{Stein1} \textsc{E. M. Stein}, \emph{Singular Integral and
Differentiability Properties of Functions}, Princeton University Press,
Princeton, New Jersey, 1970.

\bibitem{Stein} \textsc{E. M. Stein}, \emph{Harmonic Analysis: Real Variable
Methods, Orthogonality, and Oscillatory Integrals}, Princeton
University Press, Princeton, New Jersey, 1993.
\end{thebibliography}
\end{document}